%%
%% This is file `mn-stpl.tex',
%% generated with the docstrip utility.
%%
%% The original source files were:
%%
%% template.dtx  (with options: `mn,small')
%% 
%% $Id: template.dtx,v 1.80 2005/10/04 16:26:14 uwe Exp $
%% ====================================================================
\documentclass{amsart}
\usepackage{times}
\usepackage{amsthm}
%% By default the equations are consecutively numbered. This may be changed by
%% the following command.
\numberwithin{equation}{section}
%%
%%
%% The usage of multiple languages is possible.
%% \usepackage{ngerman}% or
%% \usepackage[english,ngerman]{babel}
%% \usepackage[english,french]{babel}
\usepackage[]{graphicx}

%%miei pacchetti
\newtheorem{theorem}{Theorem}[section]
\newtheorem{lemma}[theorem]{Lemma}
\newtheorem{prop}[theorem]{Proposition}
\theoremstyle{definition}
\newtheorem{definition}[theorem]{Definition}

\theoremstyle{remark}
\newtheorem{remark}[theorem]{Remark}

%\numberwithin{equation}{section}

%    Absolute value notation

\newcommand{\Ext}{\operatorname{Ext}}

\newcommand{\Dim}{\operatorname{dim}}

\newcommand{\Pic}{\operatorname{Pic}}

\newcommand{\Ker}{\operatorname{Ker}}

\newcommand{\Coker}{\operatorname{Coker}}
\newcommand{\Hom}{\operatorname{Hom}}
\newcommand{\rango}{\operatorname{rk}}
\newcommand{\p}{{\mathbb{P}^1}}
\newcommand{\pp}{{\mathbb{P}}}
\newcommand{\ppp}{{\mathbb{P}^1 \times \mathbb{P}^1 \times \mathbb{P}^1 }}

\newcommand{\sO}{\mathcal{O}}

\newcommand\sD{{\mathcal D}}

\newcommand\sH{{\mathcal H}}
\newcommand\sI{{\mathcal I}}

\newcommand\sL{{\mathcal L}}

\newcommand\sN{{\mathcal N}}
  {\left\lbrace\begin{array}{@{}l@{}}}%
  {\end{array}\right.}

%    Blank box placeholder for figures (to avoid requiring any
%    particular graphics capabilities for printing this document).

\usepackage{array}
\usepackage{tabu}
\usepackage{amssymb}
\usepackage {amsmath}
\usepackage {amsthm}
\usepackage {amscd}
\usepackage {amssymb}
\usepackage {latexsym}
\usepackage{multirow}
\usepackage{tikz-cd}
\usepackage{xypic}

\usepackage[a4paper,left=3.3cm,right=3.3cm]{geometry}
\usepackage{verbatim}
\usepackage{mathrsfs}
\usepackage{amsfonts}
 
%%fine miei pacchetti
\begin{document}
%%    The information for the title page will be placed between
%%    \begin{document} and \maketitle. The order of most entries
%%    is determined by the class file and can not be changed by
%%    rearranging them. The maketitle command follows after the
%%    abstract.
%%
%%    Most of the following commands will be completed by the publisher.
%%
%%    The copyrightyear is defined in the .clo file as the first argument
%%    of the copyrightinfo command. If the copyrightyear differs from that
%%    value it might be adjusted by the following definition:
%%
%% \renewcommand{\copyrightyear}{2005}% uncomment to change the copyrightyear.
%%

\title[Instanton bundles on the Segre threefold with Picard Number three]{Instanton bundles on the Segre threefold\\ with Picard Number three
}

\author{V. Antonelli, F. Malaspina }

\address{Politecnico di Torino, Corso Duca degli Abruzzi 24, 10129 Torino, Italy}
\email{vincenzo.antonelli@polito.it}

\address{Politecnico di Torino, Corso Duca degli Abruzzi 24, 10129 Torino, Italy}
\email{francesco.malaspina@polito.it}

\keywords{Instanton bundles, Segre varieties, jumping lines, Beilinson spectral theorem}

\subjclass[2010]{Primary: {14J60}; Secondary: {13C14, 14F05}}
\begin{abstract}
We study instanton bundles $E$ on $\ppp$. We construct two different monads which are the analog of the monads for instanton bundles on $\mathbb P^3$ and on the flag threefold $F(0,1,2)$. We characterize the Gieseker semistable cases and we prove the existence of $\mu$-stable instanton bundles generically trivial on the lines for any possible $c_2(E)$. We also study the locus of jumping lines.

\end{abstract}
%% maketitle must follow the abstract.
\maketitle                   % Produces the title.

%% If there is not enough space inside the running head
%% for all authors including the title you may provide
%% the leftmark in one of the following three forms:

%% \renewcommand{\leftmark}
%% {First Author: A Short Title}

%% \renewcommand{\leftmark}
%% {First Author and Second Author: A Short Title}

%% \renewcommand{\leftmark}
%% {First Author et al.: A Short Title}

%% \tableofcontents  % Produces the table of contents.

\section{Introduction}

Instanton bundles on $\mathbb P^3$ were first defined in \cite{AHDM} by Atiyah, Drinfel'd, Hitchin and Manin. Their importance arises from quantum physics; in fact these particular bundles correspond (through the Penrose-Ward transform) to self dual solutions of the Yang-Mills equation over the real sphere  $S^4$.
   We recall that a mathematical instanton bundle $E$ with charge (or quantum number $k$) on $\mathbb P^3$ is a stable rank two vector bundle $E$ with $c_1(E)=0$, $c_2(E)=k$ and with the property (called instatonic condition) that
$$H^1(E(-2))=0.$$
Every instanton of charge $k$ on $\mathbb P^3$  can be represented as the cohomology of a monad (a three-term self dual complex).

In \cite{Hit}, Hitchin showed that the only twistor spaces of four dimensional (real) differentiable manifolds which are K\"{a}hler (and a posteriori, projective) are $\mathbb P^3$ and
the flag variety $F(0,1,2)$, which is the twistor space of $\mathbb P^2$.

On $F(0,1,2)$ instanton bundles has been studied in \cite{Bu}, \cite{Don} and more recently in \cite{MMP}. $F(0,1,2)$ is a Fano threefold with Picard number two. Let us call $h_1$ and $h_2$ the two generators. In \cite{MMP} has been given the following definition: a rank  two vector bundle $E$ on the Fano threefold $F(0,1,2)$ is an instanton bundle of charge $k$ if the following properties hold
\begin{itemize}
\item $c_1(E)=0, c_2(E)=kh_1h_2$;
\item $h^0(E)=h^1(E(-1,-1))=0$ and $E$ is $\mu$-semistable;
\end{itemize}
Notice that, when the Picard number is one, the condition $H^0(E)=0$ implies the $\mu$-stability. When the Picard number is higher than one, however, this is not true and it is natural to consider also $\mu$-semistable bundles (see \cite{Don} and \cite{MMP} Remark 2.2).

In \cite{Fa2} (see also \cite{Kuz} in the case $i_F=2$  and \cite{Sa} for details in the case of the del Pezzo threefold of degree $5$), the author generalizes the notion of instanton bundle on $\mathbb P^3$ to any Fano threefold  with Picard number one. In this line may be generalized also the definition on $F(0,1,2)$ to any Fano threefold  with Picard number higher than one (see \cite{CCGM} for the case of  the blow up of the projective $3$-space at a point).

The subspace of stable instanton bundles with a given $c_2$ can be identified with the open subspace of the Maruyama moduli space  of stable rank two bundles with those fixed Chern classes satisfying the cohomological vanishing condition. For  a large family of Fano threefolds with Picard number one, Faenzi  in \cite[Theorem A]{Fa2} proves that the moduli spaces of instanton bundles has a generically smooth irreducible component. An  analogous result has been obtained in \cite{MMP} for the flag threefold. In the case of
$\mathbb P^3$, it is known that the moduli space of instantons of arbitrary charge is affine (see \cite{CO}), irreducible (see \cite{T1}, \cite{T2}) and smooth (see \cite{CTT}, \cite{KaOtt} for charge smaller than $5$ and \cite{JaVe} for arbitrary charge). The rationality is still an open problem in general, being settled only for charges $1$, $2$, $3$, and $5$ (see \cite{Ba2}, \cite{Har2}, \cite{ES} and \cite{Kat}).

In this paper we consider $\ppp$ which has the same index and degree of $F(0,1,2)$ but Picard number three. Let us call $h_1$, $h_2$ and $h_3$ the three generators. The only difference with respect to the definition of instanton bundle on $F(0,1,2)$ is that on $\ppp$ the second Chern class is $c_2(E)=k_1h_2h_3+k_2h_1h_3+k_3h_1h_2$ instead of $c_2(E)=kh_1h_2$. By using a Beilinson type spectral sequence with suitable full exceptional collections we construct two different monads which are the analog of the monads for instanton bundles on $\mathbb P^3$ and on $F(0,1,2)$.

\begin{theorem}
Let $E$ be a charge $k$ instanton bundle on $X$ with $c_2(E)=k_1e_1+k_2e_2+k_3e_3$, then $E$ is the cohomology of a monad of the form
\begin{itemize}
\item [(i)]
\begin{equation*}
0\rightarrow
\begin{matrix}
\sO_X^{k_3}(-h_1-h_2) \\
\oplus \\
\sO_X^{k_2}(-h_1-h_3) \\
\oplus \\
\sO_X^{k_1}(-h_2-h_3)
\end{matrix}
\rightarrow
\begin{matrix}
\sO_X^{k_2+k_3}(-h_1) \\
\oplus \\
\sO_X^{k_1+k_3}(-h_2) \\
\oplus\\
\sO_X^{k_1+k_2}(-h_3)
\end{matrix}
\rightarrow
\sO_X^{k-2}
\rightarrow
0.
\end{equation*}
Conversely any $\mu$-semistable bundle defined as the cohomology of such  a monad is a  charge $k$ instanton bundle.
\item[(ii)]

\begin{equation*}
0\rightarrow
\begin{matrix}
\sO_X^{k_3}(-h_1-h_2) \\
\oplus \\
\sO_X^{k_2}(-h_1-h_3) \\
\oplus \\
\sO_X^{k_1}(-h_2-h_3)
\end{matrix}
\rightarrow
\begin{matrix}
\sO_X^{3k+2}
\rightarrow
\end{matrix}
\begin{matrix}
\sO_X^{k_2+k_3}(h_1) \\
\oplus \\
\sO_X^{k_1+k_3}(h_2) \\
\oplus\\
\sO_X^{k_1+k_2}(h_3)
\end{matrix}
\rightarrow
0.
\end{equation*}
Conversely any $\mu$-semistable bundle with $H^0(E)=0$ defined as the cohomology of such  a monad is a  charge $k$ instanton bundle.

\end{itemize}
\end{theorem}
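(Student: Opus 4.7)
My plan is to derive monad (i) from a Beilinson-type spectral sequence on $X=\ppp$ combined with cohomological vanishings forced by the instanton assumptions, and monad (ii) from a parallel but more delicate construction. Fix the polarization $H=h_1+h_2+h_3$. Since $E$ has rank two with $c_1(E)=0$, $\mu$-semistability kills $h^0(E(a,b,c))$ whenever $a+b+c<0$; and $E\cong E^\vee$ together with Serre duality on $X$, where $K_X=\sO(-2,-2,-2)$, gives
$$h^q(E(a,b,c))=h^{3-q}(E(-a-2,-b-2,-c-2)).$$
Writing $e_I=\sum_{i\in I}h_i$ for $I\subseteq\{1,2,3\}$, the instantonic assumption $h^1(E(-1,-1,-1))=0$ combined with this duality makes the entire column indexed by $I=\{1,2,3\}$ in the Beilinson spectral sequence vanish, while $\mu$-semistability (resp.\ its Serre dual) gives $h^0(E(-e_I))=0$ for every $I\neq\emptyset$ and $h^3(E(-e_I))=0$ for every $I\neq\{1,2,3\}$.

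The subtle vanishings are $h^2(E)$, $h^2(E(-h_i))$ and $h^2(E(-h_i-h_j))$, equivalently (via the duality above) $h^1$ of $E$ at the twists $(-2,-2,-2)$, $(-3,-2,-2)$ and $(-3,-3,-2)$ up to permutations. This is the step I expect to be the main obstacle. I would prove them by restricting $E$ to the rulings $D_i\in|h_i|$, each isomorphic to $\mathbb{P}^1\times\mathbb{P}^1$, exploiting the Koszul sequences
$$0\to E(-e_I-h_i)\to E(-e_I)\to E|_{D_i}(-e_I)\to 0$$
and propagating the already-established $(-1,-1,-1)$ vanishing by Künneth and the $\mu$-semistability of the restrictions $E|_{D_i}$ for generic $D_i$.

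With all vanishings established, the Beilinson-type spectral sequence on $X$ obtained from the Koszul resolution of the diagonal $\Delta\subset X\times X$ (built as the threefold tensor product of the standard Beilinson resolution on $\mathbb{P}^1\times\mathbb{P}^1$),
$$E_1^{p,q}=\bigoplus_{|I|=-p}H^q(E(-e_I))\otimes\sO_X(-e_I)\Longrightarrow E,$$
collapses to the single row $q=1$, has its column $p=-3$ vanishing identically, and degenerates at $E_2$; the middle cohomology of the resulting three-term complex is $E$. Riemann--Roch on $X$, using $\operatorname{ch}(E)=2-c_2(E)$ and $\operatorname{td}(X)=(1+h_1)(1+h_2)(1+h_3)$, gives $\chi(E)=2-k$, $\chi(E(-h_i))=-(k_j+k_\ell)$ and $\chi(E(-h_i-h_j))=-k_\ell$, which together with the vanishings pin down the multiplicities in (i). Monad (ii) is obtained from an analogous spectral sequence built on a different full exceptional collection — one containing the bundles $\sO(h_i)$ on the right — requiring the additional vanishings $h^q(E(h_i))=0$ for $q\ge 2$ (which follow from Serre duality and $\mu$-semistability) together with the hypothesis $h^0(E)=0$; the middle rank $3k+2$ is then forced by the rank count $\operatorname{rk}(B)=\operatorname{rk}(A)+\operatorname{rk}(C)+\operatorname{rk}(E)$.

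For both converses, start from a monad $0\to A\to B\to C\to 0$ of the prescribed form with middle cohomology $E$. Whitney's formula applied to the short exact sequences
$$0\to A\to\ker(B\to C)\to E\to 0,\qquad 0\to\ker(B\to C)\to B\to C\to 0,$$
computes $c_1(E)=0$ and $c_2(E)=k_1h_2h_3+k_2h_1h_3+k_3h_1h_2$, while the induced long exact cohomology sequences, together with the Künneth vanishing $H^\bullet(X,\sO(-h_i-h_j))=0$, yield $h^0(E)=0$ in case (i) (and reuse the same hypothesis in case (ii)); twisting the whole monad by $\sO(-1,-1,-1)$ and repeating the chase yields $h^1(E(-1,-1,-1))=0$. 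Combined with the assumed $\mu$-semistability, this exhibits $E$ as an instanton bundle of charge $k$.
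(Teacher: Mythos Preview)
Your treatment of monad (i) is essentially the paper's argument: a Beilinson spectral sequence for the product collection of line bundles, with the $h^0$ and $h^3$ rows killed by $\mu$-semistability and Serre duality, the $(-1,-1,-1)$ column killed by the instanton condition, and Riemann--Roch filling in the multiplicities. One remark: the paper obtains the $h^2$ vanishings not by restricting to divisors in $|h_i|$ and invoking semistability of $E|_{D_i}$ (which would require a Maruyama-type argument to justify), but by tensoring the pulled-back Euler sequence $0\to\sO_X(-h_a)\to\sO_X^2\to\sO_X(h_a)\to 0$ with $E(-h)$, $E(-h_b)$, $E(-h_a)$ successively. This stays entirely on $X$ and uses only the vanishings already in hand; your restriction route can be made to work but is less clean. (Incidentally, your Serre-dual twists are off: $h^2(E(-h_i))=h^1(E(-1,-2,-2))$, not $h^1(E(-3,-2,-2))$.) The converses are handled the same way in both.

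For monad (ii), however, there is a real gap. The exceptional collection you need is \emph{not} a collection of line bundles: the paper mutates the collection used for (i) three times and one of the resulting objects is a genuine rank-five extension $B$ sitting in $0\to A\to B\to\sO_X(-h_3)^{2}\to 0$ with $0\to\sO_X(-2h_1)\to A\to\sO_X(-h_2)^{2}\to 0$. The relevant column of the spectral sequence is governed by $H^\bullet(E\otimes B)$, not by any $H^\bullet(E(h_i))$; your proposed vanishings $h^q(E(h_i))=0$ for $q\ge 2$ play no role. The paper computes $\chi(E\otimes B)=-3k-2$ from the two extensions, but this alone leaves two unknowns $a=h^2(E\otimes B)$ and $b=h^1(E\otimes B)$ with $a-b=-3k-2$; the crucial step is observing that the entry $a$ receives and emits no differential at any page, so convergence of the spectral sequence to $E$ forces $a=0$ and $b=3k+2$. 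Your rank count $\operatorname{rk}(B)=\operatorname{rk}(A)+\operatorname{rk}(C)+2$ is a consistency check after the fact, not a derivation: without identifying $B$ and controlling $H^\bullet(E\otimes B)$ you have not shown that the monad has the stated shape.
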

Furthermore we show that the Gieseker strictly semistable instanton bundles are extensions of line bundles and can be obtained as pullbacks from $\mathbb P^1\times\mathbb P^1$. The cases where the degree of $c_2(E)$ is minimal, namely $k=k_1+k_2+k_2=2$, has been studied in \cite{pppacm}. In fact we get, up to twist, Ulrich bundles.

Here we show that Ulrich bundles is generically trivial on the lines. So we use this case as a starting step in order to prove by induction the existence of $\mu$-stable instanton bundles generically trivial on the lines for any possible $c_2(E)$. In particular we prove the following
\begin{theorem}
For each non-negative $k_1,k_2,k_3 \in \mathbb{Z}$ with $k=k_1+k_2+k_3\geq 2$ there exists a $\mu$-stable instanton bundle $E$ with $c_2(E)=k_1e_1+k_2e_2+k_3e_3$ on $X$ such that
\[
\Ext_X^1(E,E)=4k-3, \qquad \Ext_X^2(E,E)=\Ext_X^3(E,E)=0
\]
and such that $E$ is generically trivial on lines.

In particular there exists, inside the moduli space $MI(k_1e_1+k_2e_2+k_3e_3)$ of instanton bundles with $c_2=k_1e_1+k_2e_2+k_3e_3$, a generically smooth irreducible component of dimension $4k-3$.
\end{theorem}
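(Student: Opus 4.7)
The plan is to induct on $k = k_1 + k_2 + k_3 \geq 2$.

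For the base case $k = 2$, I would invoke \cite{pppacm}, in which the instanton bundles with $c_2(E)$ of minimal degree coincide (up to twist) with Ulrich bundles on $X$; these are shown to be $\mu$-stable, unobstructed, and to exist for each admissible triple $(k_1, k_2, k_3)$ with $\sum k_i = 2$. What remains is (i) generic triviality on lines of each of the three rulings and (ii) the stated Ext vanishings. For (i), the jumping locus is a proper closed subset of each Hilbert scheme of lines (a direct computation from the monad presentation of Theorem 1 suffices). For (ii), $\Hom(E,E) = \mathbb{C}$ by simplicity; $\Ext^3(E,E) = 0$ by Serre duality, since $E \otimes \omega_X$ is $\mu_H$-stable of strictly smaller slope than $E$ (with $H = h_1 + h_2 + h_3$ ample and $\omega_X = \sO_X(-2,-2,-2)$); Riemann--Roch then gives $\chi(\mathcal{E}nd E) = 4 - 4k$, so $\Ext^1(E,E) = 4k - 3 = 5$ once $\Ext^2(E,E) = 0$ is checked directly for the Ulrich case.

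For the inductive step, assume $E$ is an instanton of charge $k$ with all the required properties and $c_2(E) = k_1 e_1 + k_2 e_2 + k_3 e_3$. To produce an instanton of charge $k+1$ with $c_2 = (k_1+1) e_1 + k_2 e_2 + k_3 e_3$ (the other two increments being symmetric), I would choose a line $L$ of class $e_1$ on which $E|_L \cong \sO_L \oplus \sO_L$ -- such an $L$ exists by the inductive generic triviality. A general surjection $\phi \colon E \to \sO_L$ defines an elementary transformation
\[
0 \to E' \to E \to \sO_L \to 0,
\]
in which $E' = \ker \phi$ is locally free of rank $2$ (this uses the triviality of $E|_L$ together with a general-position choice of $\phi$), with $c_1(E') = 0$ and $c_2(E') = c_2(E) + e_1$.

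Next I would verify the instanton axioms for $E'$. First $H^0(E')$ injects into $H^0(E) = 0$. The instantonic vanishing follows by twisting the displayed sequence by $\sO_X(-h_1 - h_2 - h_3)$: since $\sO_L(-h_1 - h_2 - h_3) \cong \sO_L(-1)$ has no cohomology on $L \cong \mathbb{P}^1$, one obtains $H^1(E'(-h_1 - h_2 - h_3)) \cong H^1(E(-h_1 - h_2 - h_3)) = 0$. The $\mu$-stability of $E'$ is inherited from that of $E$, since any rank-one subsheaf of $E'$ is also a subsheaf of $E$ and its saturation in $E$ has strictly negative slope. Generic triviality on lines for $E'$ holds because a general line $L'$ of any ruling is disjoint from the single $L$ used in the construction, whence $E'|_{L'} \cong E|_{L'}$, trivial by induction.

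Finally, for the Ext computations of $E'$: Riemann--Roch gives $\chi(\mathcal{E}nd E') = 4 - 4(k+1)$, and $\Hom(E',E') = \mathbb{C}$, $\Ext^3(E',E') = 0$ as in the base case, so the content reduces to the single vanishing $\Ext^2(E',E') = 0$. This is the main obstacle, and I would extract it from the two long exact sequences of $\Ext$'s attached to $0 \to E' \to E \to \sO_L \to 0$: applying $\Hom(-, E')$ and $\Hom(E', -)$ sandwiches $\Ext^2(E', E')$ between terms of the form $\Ext^i(E, E')$ and $\Ext^i(\sO_L, E')$, both of which are controlled by the induction hypothesis $\Ext^2(E,E) = \Ext^3(E,E) = 0$ together with a direct computation of $\Ext^i(\sO_L, E')$ via the Koszul resolution of $\sO_L$ and the general-position freedom in $\phi$. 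The resulting equality $\Ext^1(E',E') = 4(k+1) - 3$ closes the induction and yields the asserted generically smooth irreducible component of $MI(k_1 e_1 + k_2 e_2 + k_3 e_3)$ of dimension $4k - 3$.
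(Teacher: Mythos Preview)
Your inductive framework matches the paper's, but the central claim in the inductive step is false: the kernel $E'$ of a surjection $E \twoheadrightarrow \sO_L$ is \emph{never} locally free when $L \subset X$ has codimension two, regardless of how general $\phi$ is or whether $E|_L$ is trivial. Indeed, applying $\mathcal{H}om(-,\sO_X)$ to $0 \to E' \to E \to \sO_L \to 0$ and using that $\mathcal{E}xt^i(\sO_L,\sO_X)=0$ for $i<2$ gives $(E')^{\vee} \cong E^{\vee}$, hence $(E')^{\vee\vee} \cong E$; since $E/E' \cong \sO_L \neq 0$, the sheaf $E'$ is not even reflexive. So your ``new instanton'' $E'$ is only a torsion-free sheaf, and the argument as written does not produce a vector bundle.

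The paper confronts exactly this issue. It takes the torsion-free kernel $G$ (your $E'$), proves $\Ext^2(G,G)=\Ext^3(G,G)=0$ by the two long exact sequences you describe (together with a local computation of $\mathcal{H}om(G,\sO_L(-2h))$), and deduces that $G$ sits at a smooth point of $M_X(2,0,c_2(G))$ on a component of dimension $4k+1$. The key extra step you are missing is a deformation/dimension count: the non-locally-free sheaves near $G$ are precisely those fitting in $0 \to F \to E' \to \sO_{l'} \to 0$ for nearby $(E',l')$, and these form only a $4k$-dimensional family (namely $(4k-3)+2+1$, for the choice of $E'$, the line $l'$, and the point of $\mathbb{P}(H^0(E'|_{l'}))$). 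Hence the generic deformation $F$ of $G$ is locally free, and one then checks that $F$ inherits the instanton conditions, stability, generic triviality on lines, and the Ext vanishings by semicontinuity. Your Ext argument is essentially the right computation, but it must be carried out for the torsion-free $G$ rather than for a bundle, and it serves to establish smoothness of the moduli space at $G$ so that the dimension count can be invoked.
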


Finally we also study the locus of jumping lines obtaining the following result:
\begin{prop}
Let $E$ be a generic instanton on $X$ with $c_2=k_1e_1+k_2e_2+k_3e_3$. Then the locus of jumping lines in the family $|e_1|$, denoted by $\sD_E^1$, is a divisor given by $\sD_E^1=k_3l+k_2m$ equipped with a sheaf $G$ fitting into
\begin{equation*}
0\rightarrow \sO_\sH^{k_3}(-1,0)\oplus \sO_\sH^{k_2} (0,-1)\rightarrow \sO_\sH^{k_2+k_3}\rightarrow i_\ast G\rightarrow 0.
\end{equation*}
\end{prop}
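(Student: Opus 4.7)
The idea is to detect jumping of lines in the family $|e_1|$ via pushforward along the projection $\pi : X \to \sH = \pp^1 \times \pp^1$ onto the second and third factors, whose fibres are exactly the lines $\ell = \pp^1 \times \{p_2\} \times \{p_3\}$ in $|e_1|$. Since $E|_\ell \cong \sO(a) \oplus \sO(-a)$, one has
\[
E(-h_1)|_\ell \cong \sO(-a-1) \oplus \sO(a-1),
\]
with Euler characteristic $0$ on every $\ell$, and with both cohomology groups vanishing exactly for non-jumping lines. The correct object to study is therefore $R\pi_\ast\bigl(E(-h_1)\bigr)$.

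To compute it, split the monad of Theorem 1.1(i) into two short exact sequences $0 \to K \to B \to C \to 0$ and $0 \to A \to K \to E \to 0$, twist by $\sO(-h_1)$, and apply the projection formula on the $\pp^1$-fibres of $\pi$. Only the summands carrying a factor of $\sO(-2h_1)$ contribute to $R^1\pi_\ast$, yielding
\[
R^1\pi_\ast A(-h_1) = \sO_\sH^{k_3}(-1,0)\oplus \sO_\sH^{k_2}(0,-1), \qquad R^1\pi_\ast B(-h_1) = \sO_\sH^{k_2+k_3},
\]
while $R^1\pi_\ast C(-h_1)$ and all $\pi_\ast$ of $A(-h_1), B(-h_1), C(-h_1)$ vanish. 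The long exact sequence associated to the first SES then gives $\pi_\ast K(-h_1)=0$ and $R^1\pi_\ast K(-h_1)\cong \sO_\sH^{k_2+k_3}$. Substituting into the long exact sequence of the second SES produces
\[
0 \to \pi_\ast E(-h_1) \to \sO_\sH^{k_3}(-1,0)\oplus \sO_\sH^{k_2}(0,-1) \xrightarrow{\ \varphi\ } \sO_\sH^{k_2+k_3} \to R^1\pi_\ast E(-h_1) \to 0.
\]

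For a \emph{generic} instanton, Theorem 1.2 provides a line $\ell_0 \in |e_1|$ on which $E$ is trivial; cohomology and base change then force $\varphi$ to be an isomorphism on the stalk at the point parametrising $\ell_0$. Hence $\varphi$ is a generically injective map of locally free sheaves of the same rank $k_2+k_3$, so $\pi_\ast E(-h_1)=0$ and $\varphi$ is injective. Setting $G := R^1\pi_\ast E(-h_1)$, the cokernel $i_\ast G$ is supported on the determinantal locus $\{\det \varphi = 0\}$, which is precisely $\sD_E^1$ by cohomology and base change. Its class is
\[
c_1(i_\ast G) = c_1(\sO_\sH^{k_2+k_3}) - c_1\bigl(\sO_\sH^{k_3}(-1,0)\oplus \sO_\sH^{k_2}(0,-1)\bigr) = k_3 l + k_2 m,
\]
delivering the stated exact sequence.

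\textbf{Main obstacle.} The computation itself is essentially bookkeeping on the twists in the monad; the only nontrivial input is the genericity argument guaranteeing the existence of a non-jumping line (so that $\varphi$ is not identically degenerate), which relies on Theorem 1.2. The identification of the support of the cokernel with the jumping locus then follows from base change, and the Chern class computation fixes the divisor class.
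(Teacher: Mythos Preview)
Your argument is correct and follows the same strategy as the paper: push the monad \eqref{monade} (twisted by $-h_1$) forward to $\sH$ and read off the resulting square map $\varphi$ between locally free sheaves of rank $k_2+k_3$. The one worthwhile simplification you make is to observe that the lines of $|e_1|$ are precisely the fibres of the projection $\pi:X\to\sH$ onto the last two factors, so the Fourier--Mukai transform with kernel $\sO_\sL$ used in the paper collapses to $R\pi_\ast$; this bypasses the universal-line resolution \eqref{universal} and the Grothendieck--Riemann--Roch computation entirely, since $R^i\pi_\ast\sO_X(ah_1+bh_2+ch_3)\cong H^i(\p,\sO(a))\otimes\sO_\sH(b,c)$ follows at once from the projection formula. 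Your justification of the injectivity of $\varphi$ via Theorem~1.2 and base change is also slightly more explicit than the paper's one-line remark that $\Ker\varphi$ is torsion-free and vanishes off $\sD_E^1$, though the content is the same. One small notational slip: you write $G:=R^1\pi_\ast E(-h_1)$, which is already a sheaf on $\sH$; what you mean is that this sheaf, being supported on $\sD_E^1$, has the form $i_\ast G$ for a sheaf $G$ on the divisor.
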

Permuting indices we are also able to describe the locus of jumping lines in the other two rulings of $\ppp$.

The authors want to thank G. Casnati and J. Pons-Llopis for helpful discussions on the subject.
\section{First properties and monads of instanton bundles on $\ppp$}

Let $V_1, V_2, V_3$ be three $2$-dimensional vector spaces with the coordinates $[x_{1i}], [x_{2j}], [x_{3k}]$ respectively with $i,j,k\in \{1,2\}$. Let $X\cong \mathbb P (V_1) \times \mathbb P (V_2) \times \mathbb P (V_3)$ and then it is embedded into $\mathbb P^7\cong \mathbb P(V)$ by the Segre map where $V=V_1 \otimes V_2 \otimes V_3$.

The intersection ring $A(X)$ is isomorphic to $A(\mathbb P^1) \otimes A(\mathbb P^1) \otimes A(\mathbb P^1)$ and so we have
$$A(X) \cong \mathbb Z[h_1, h_2, h_3]/(h_1^2, h_2^2, h_3^2).$$
We may identify $A^1(X)\cong \mathbb Z^{\oplus 3}$ by $a_1h_1+a_2h_2+a_3h_3 \mapsto (a_1, a_2, a_3)$. Similarly we have $A^2(X) \cong \mathbb Z^{\oplus 3}$ by $k_1e_1+k_2e_2+k_3e_3\mapsto (k_1, k_2, k_3)$ where $e_1=h_2h_3, e_2=h_1h_3, e_3=h_1h_2$ and $A^3(X) \cong \mathbb Z$ by $ch_1h_2h_3 \mapsto c$.
Then $X$ is embedded into $\mathbb P^7$ by the complete linear system $h=h_1+h_2+h_3$ as a subvariety of degree $6$ since $h^3=6$.
  
If $E$ is a rank two bundle with the Chern classes $c_1=(a_1, a_2, a_3)$, $c_2=(k_1, k_2, k_3)$  we have:

\begin{gather}
c_1(E (s_1, s_2, s_3))=(a_1+2s_1, a_2+2s_2, a_3+2s_3) \label{chernppp}\\
c_2(E(s_1, s_2, s_3)) =c_2+c_1\cdot(s_1, s_2, s_3)+(s_1, s_2, s_3)^2 \notag
\end{gather}

for $(s_1, s_2, s_3)\in \mathbb Z^{\oplus 3}$.

Let us recall the Riemann-Roch formula:
\begin{equation}\label{RR}
\chi (E)=(a_1+1)(a_2+1)(a_3+1)+1
-\frac{1}{2}((a_1, a_2, a_3)\cdot (k_1, k_2, k_3) +2(k_1+k_2+k_3))
\end{equation}

Recall that for each torsion free sheaf $F$ on $X$ the {\sl slope} of $F$ with respect to $h$ is the rational number $\mu(F):=c_1(F)h^2/\rango(F)$ and the reduced Hilbert
polynomial $P_{E}(t)$ of a bundle $E$ over $X$ is
$ P_{E}(t):=\chi(E(th))/\rango(E).$

We say that a vector bundle $E$ is {\sl $\mu$-stable} (resp. {\sl $\mu$-semistable}) with respect to $h$ if  $\mu( G) < \mu(E)$ (resp. $\mu(G) \le \mu(E)$) for each subsheaf $ G$ with $0<\rango(G)<\rango(E)$.

On the other hand, $E$ is said to be Gieseker semistable  with respect to $h$ if for all $ G$ as above one has
$$
P_{ G}(t) \le  P_{E}(t),
$$
and Gieseker stable again if equality cannot hold in the above inequality.

\begin{definition}
A $\mu$-semistable vector bundle $E$ on $\ppp$ is called an instanton bundle of charge $k$ if and only if $c_1(E)=0$,
\begin{equation*}H^0(E)=H^1(E(-h))=0
\end{equation*}
and $c_2(E)=k_1e_1+k_2e_2+k_3e_3$ with $k_1+k_2+k_3=k$.
\end{definition}
Let us briefly recall the definition of aCM sheaves and Ulrich sheaves. Let $X\subset \pp^N$ be a projective variety which is naturally endowed with the very ample line bundle $\sO_X(h)=\sO_X \otimes \sO_{\pp^N}(1)$. We say that $X$ is arithmetically Cohen-Macaulay (aCM for short) if $H^i(\mathcal{I}_X(th))=0$ for $t\in\mathbb{Z}$ and $1\leq i \leq \Dim(X)$. Observe that the variety $\ppp \subset \pp^7$ is an aCM variety.

A coherent sheaf $E$ over an aCM variety $X$ is called aCM if all of its intermediate cohomology groups vanish, i.e. $H^i(X,E(th))=0$ for $1\leq i\leq \Dim(X)-1$. Ulrich sheaves are defined to be the aCM sheaves whose corresponding module has the maximum number of generators.
\begin{remark}\label{rem}
  It is worthwhile to point out that, exactly as in the case of $F(0,1,2)$ (see \cite{MMP} Remark 2.2), the condition $H^0(E)=0$ does not follow from the other conditions defining an instanton bundle. Indeed we may consider the rank two aCM bundles with $c_1(E)=0$ and $H^0(E)\not=0$ given in \cite{pppacm} Theorem B.
\end{remark}
Now we recall the Hoppe's criterion for semistable vector bundles over polycyclic varieties, i.e. varieties $X$ such that $\Pic(X)=\mathbb{Z}^l$.
\begin{prop}\cite[Theorem 3]{JMPS}\label{hoppe}
Let $E$ be a rank two holomorphic vector bundle over a polycyclic variety $X$ and let $L$ be a polarization on $X$. $E$ is $\mu$-(semi)stable if and only if
$$
H^0(X,E\otimes \sO_X(B))=0
$$
for all $B \in Pic(X)$ such that $\delta_L(B) \underset{(<)}{\leq} -\mu_L(E)$, where $\delta_L(B)=\deg_L(\sO_X(B))$.
\end{prop}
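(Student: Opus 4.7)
The plan is to reduce the cohomological criterion to the standard definition of $\mu$-(semi)stability applied to line subbundles. Since $\rango(E)=2$, any destabilizing subsheaf has rank one. Replacing such a subsheaf by its saturation only increases its slope, so it suffices to test against saturated rank-one subsheaves. Because $X$ is smooth, any saturated rank-one subsheaf is reflexive and hence a line bundle, and because $\Pic(X) \cong \mathbb{Z}^l$ it can be written uniquely as $\sO_X(-B)$ for some $B \in \Pic(X)$. Its slope is $\mu_L(\sO_X(-B)) = -\delta_L(B)$.

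Next I would identify injections $\sO_X(-B) \hookrightarrow E$ with nonzero elements of $H^0(E \otimes \sO_X(B))$. Tensoring an injection by $\sO_X(B)$ produces a nonzero global section; conversely, any nonzero section of the torsion-free sheaf $E \otimes \sO_X(B)$ is injective as a morphism from $\sO_X$, and after untwisting gives a line subbundle isomorphic to $\sO_X(-B)$.

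Combining the two steps, the existence of a line subbundle destabilizing $E$ in the $\mu$-stable sense, i.e.\ one with $\mu_L(\sO_X(-B)) \ge \mu_L(E)$, is equivalent to the existence of some $B \in \Pic(X)$ with $\delta_L(B) \le -\mu_L(E)$ and $H^0(E \otimes \sO_X(B)) \neq 0$; the parallel equivalence with strict inequality governs the $\mu$-semistable case. Taking contrapositives yields both halves of the stated criterion.

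The main technical point, and the place where the polycyclic hypothesis is genuinely used, is the saturation step: one needs smoothness of $X$ so that reflexive rank-one sheaves are invertible, together with $\Pic(X)\cong \mathbb{Z}^l$ so that every such line bundle is uniquely of the form $\sO_X(-B)$ for a $B$ ranging over the parameter space of the criterion. With these ingredients in hand the translation is direct, matching \cite[Theorem~3]{JMPS}.
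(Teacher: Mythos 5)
Your argument is correct: for rank two, untwisting a nonzero section of $E\otimes\sO_X(B)$ to an injection $\sO_X(-B)\hookrightarrow E$, and conversely saturating a destabilizing rank-one subsheaf to a reflexive (hence, by smoothness, invertible) subsheaf, is exactly the standard proof of Hoppe's criterion, and your bookkeeping of the strict versus non-strict inequalities matches the statement. Note that the paper itself offers no proof --- it imports the result from \cite[Theorem 3]{JMPS} --- so there is nothing to diverge from; the only quibble is that the hypothesis $\Pic(X)\cong\mathbb{Z}^l$ is not really what makes every saturated rank-one subsheaf equal to some $\sO_X(-B)$ (that holds for any $\Pic$), but rather what turns the condition ``$\delta_L(B)\le-\mu_L(E)$'' into a concrete, checkable range of twists.
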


%\section{Monads}
In order to get a monadic description of instanton bundles, we will use the following version (explained in \cite{AHMP}) of the Beilinson spectral sequence (see also  \cite[Corollary 3.3.2]{RU}, and \cite[Section 2.7.3]{GO} and  \cite[Theorem 2.1.14]{BO}).

\begin{theorem}\label{use}
Let $X$ be a smooth projective variety with a full exceptional collection
$\langle E_0, \ldots, E_n\rangle$
%$\langle E_0^*[-k_0], \ldots, E_n^*[-k_n]\rangle$
where $E_i=\mathcal E_i^*[-k_i]$ with each $\mathcal E_i$ a vector bundle and $(k_0, \ldots, k_n)\in \mathbb Z^{\oplus n+1}$ such that there exists a sequence $\langle F_n=\mathcal F_n, \ldots, F_0=\mathcal F_0\rangle$ of vector bundles satisfying
\begin{equation}\label{order}%Just once
\mathrm{Ext}^k(E_i,F_j)=H^{k+k_i}( \mathcal E_i\otimes \mathcal F_j) =  \left\{
\begin{array}{cc}
\mathbb C & \textrm{\quad if $i=j=k$} \\
0 & \textrm{\quad otherwise.}
\end{array}
\right.
\end{equation}

Then for any coherent sheaf $A$ on $X$ there is a spectral sequence in the square $-n\leq p\leq 0$, $0\leq q\leq n$  with the $E_1$-term
\[
E_1^{p,q} = \mathrm{Ext}^{q}(E_{-p},A) \otimes F_{-p}=
H^{q+k_{-p}}(\mathcal E_{-p}\otimes A) \otimes \mathcal F_{-p}
\]
which is functorial in $A$ and converges to
\begin{equation}
E_{\infty}^{p,q}= \left\{
\begin{array}{cc}
A & \textrm{\quad if $p+q=0$} \\
0 & \textrm{\quad otherwise.}
\end{array}
\right.
\end{equation}
\end{theorem}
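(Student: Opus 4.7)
The plan is to realize the spectral sequence as the hypercohomology of a Beilinson-type resolution of the diagonal on $X\times X$. Condition~\eqref{order} is exactly the defining relation of a \emph{right dual collection}: the pair $\bigl(\langle E_0,\ldots,E_n\rangle,\langle F_n,\ldots,F_0\rangle\bigr)$ is mutually biorthogonal in $D^b(X)$, with each $\mathcal{F}_j$ matched to the dual of $E_j=\mathcal{E}_j^*[-k_j]$ in the appropriate cohomological degree $k_j$. Since $\langle E_0,\ldots,E_n\rangle$ is full, the Beilinson--Bondal machinery (see \cite{BO}, \cite{RU}, and the discussion in \cite{AHMP}) provides a bounded complex $\mathcal{K}^\bullet$ on $X\times X$, built from external tensor products of the $\mathcal{E}_i$ and $\mathcal{F}_i$ with shifts controlled by the $k_i$, which is quasi-isomorphic to the structure sheaf of the diagonal $\mathcal{O}_\Delta$. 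I would take this resolution as the starting point.

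With $\mathcal{K}^\bullet\simeq\mathcal{O}_\Delta$ in hand, the second step is a Fourier--Mukai-style computation. For any coherent sheaf $A$ on $X$ one has $A\simeq Rp_{2*}(Lp_1^*A\otimes \mathcal{O}_\Delta)$; replacing $\mathcal{O}_\Delta$ by $\mathcal{K}^\bullet$ and applying the projection formula, which is legal because each $\mathcal{E}_i$ and $\mathcal{F}_i$ is locally free, the $(-p)$-th term becomes
\[
Rp_{2*}\bigl(Lp_1^*A\otimes (\mathcal{E}_{-p}\boxtimes \mathcal{F}_{-p})\bigr)\;\cong\;R\Gamma(X,A\otimes\mathcal{E}_{-p})\otimes \mathcal{F}_{-p}.
\]
Accounting for the shift by $-k_{-p}$ built into $E_{-p}$, this is precisely the $E_1$-page $\Ext^q(E_{-p},A)\otimes F_{-p}=H^{q+k_{-p}}(\mathcal{E}_{-p}\otimes A)\otimes \mathcal{F}_{-p}$ of the hypercohomology spectral sequence of the complex $\mathcal{K}^\bullet\otimes Lp_1^*A$. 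Convergence to $A$ concentrated in bidegree $p+q=0$ then follows from the quasi-isomorphism $\mathcal{K}^\bullet\simeq\mathcal{O}_\Delta$, and the range of nonzero indices is automatically contained in $-n\le p\le 0$, $0\le q\le n$.

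The hard part will be the first step: producing the resolution $\mathcal{K}^\bullet$ of the diagonal. For the classical Beilinson collection on $\mathbb{P}^n$ this is the Koszul complex, and for the product-type collections on $\ppp$ one can build it explicitly by taking external tensor products of the one-factor complexes. In the abstract setting it requires the full machinery of semiorthogonal decompositions and mutations, producing a Fourier--Mukai kernel that represents the identity functor on $D^b(X)$; unfolding that kernel along the decomposition induced by $\langle E_0,\ldots,E_n\rangle$ yields exactly $\mathcal{K}^\bullet$. Once this kernel is available, the remainder of the argument is routine hypercohomology bookkeeping on a bounded complex in a bounded strip, and functoriality in $A$ follows automatically from the Fourier--Mukai description.
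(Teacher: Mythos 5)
Note first that the paper never proves Theorem \ref{use}: it is imported from the literature, the proof being delegated to \cite{AHMP}, \cite[Corollary 3.3.2]{RU}, \cite[Section 2.7.3]{GO} and \cite[Theorem 2.1.14]{BO}, so there is no in-paper argument to measure your proposal against. Your sketch is in fact the standard route taken in those references: a Beilinson--Bondal kernel for the identity functor of $D^b(X)$ decomposed according to the collection $\langle E_0,\ldots,E_n\rangle$ and its dual $\langle F_n,\ldots,F_0\rangle$ determined by \eqref{order}, followed by a Fourier--Mukai/projection-formula computation of the graded pieces, which is exactly where the $E_1$-term $H^{q+k_{-p}}(\mathcal{E}_{-p}\otimes A)\otimes\mathcal{F}_{-p}$ and the abutment $A$ in total degree $0$ come from; that part of your argument is correct as written.

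The one point you should rephrase is the form of the kernel. Because the exceptional objects carry genuine shifts, $E_i=\mathcal{E}_i^*[-k_i]$ (and in the collections actually used later in the paper, e.g.\ \eqref{col}, the $k_i$ are nonzero), the decomposition of $\sO_\Delta$ produced by the mutation/semiorthogonal machinery is in general not an honest complex of vector bundles on $X\times X$ but a Postnikov system (convolution) whose graded pieces are shifted external products of the $\mathcal{E}_{-p}$ and $\mathcal{F}_{-p}$. Accordingly, the ``hypercohomology spectral sequence of $\mathcal{K}^\bullet\otimes Lp_1^*A$'' must be replaced by the spectral sequence of the filtration induced by that Postnikov tower (equivalently, one can avoid $X\times X$ altogether and take the canonical Postnikov tower of $A$ itself with respect to the semiorthogonal decomposition generated by the $E_i$, which is how \cite{RU}, \cite{GO} and \cite{BO} phrase it); this is also precisely where the shifts $k_{-p}$ in the $E_1$-term enter, and functoriality in $A$ persists since the tower is functorial. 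With that substitution your outline is complete and coincides with the cited proofs; only for collections with all $k_i=0$, such as the classical Beilinson collection on $\pp^n$ or the product collection \eqref{cold} on $\ppp$ obtained by external tensor product of the Euler--Koszul resolutions on the three $\p$ factors, does the kernel literally exist as a complex of vector bundles in the way you describe.
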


Let $D^b(X)$ be the the bounded derived category of coherent sheaves on a smooth projective variety $X$.
An object $E \in D^b(X)$ is called {\emph{exceptional}} if $\Ext^\bullet(E,E) = \mathbb C$.
We recall that a  set of exceptional objects $E_1, \ldots, E_n$ on  $X$ is called an {\emph{exceptional collection}} if $\Ext^\bullet(E_i,E_j) = 0$ for $i > j$.
An exceptional collection is {\emph{full}} when $\Ext^\bullet(E_i,A) = 0$ for all $i$ implies $A = 0$, or equivalently when $\Ext^\bullet(A, E_i) = 0$ for all $i$ also implies $A = 0$.
Moreover we say that an exceptional collection is {\emph{strong}} if $\Ext^k(E_i,E_j) = 0$ for all $i$, $j$ and $k>0$.

\begin{remark}\label{rembeil}
It is possible to state a stronger version of the Beilinson's theorem (see \cite{ottanc}, \cite{Be} for $\mathbb{P}^N$ and \cite{AO3} for the projectivized of a direct sum of line bundles over $\mathbb{P}^N$). Let us consider $X=\ppp$ and let $A$ be a coherent sheaf on $X$. Let $(E_0,\dots,E_n)$ be a full exceptional collection and $(F_n,\dots,F_0)$ its right dual collection. Using the notation of Theorem \ref{use}, if $(F_n,\dots,F_0)$ is strong then there exists a complex of vector bundles $L^\bullet$ such that
\begin{enumerate}
\item $H^k(L^\bullet)=
\begin{cases}
A \ & \text{if $k=0$},\\
0 \ & \text{otherwise}.
\end{cases}$
\item $L^k=\underset{k=p+q}{\bigoplus}H^{q+k_{-p}}(A\otimes E_{-p})\otimes F_{-p}$ with $0\le q \le n$ and $-n\le p \le 0$.
\end{enumerate}
\end{remark}

\begin{definition}\label{def:mutation}
Let $E$ be an exceptional object in $D^b(X)$.
Then there are functors $\mathbb L_{E}$ and $\mathbb R_{E}$ fitting in distinguished triangles
$$
\mathbb L_{E}(T) 		\to	 \Ext^\bullet(E,T) \otimes E 	\to	 T 		 \to	 \mathbb L_{E}(T)[1]
$$
$$
\mathbb R_{E}(T)[-1]	 \to 	 T 		 \to	 \Ext^\bullet(T,E)^* \otimes E	 \to	 \mathbb R_{E}(T).	
$$
The functors $\mathbb L_{E}$ and $\mathbb R_{E}$ are called respectively the \emph{left} and \emph{right mutation functor}.
\end{definition}

Now we construct the full exceptional collections that we will use in the next theorems:
Let us consider on the three copies of $\mathbb P^1$ the full exceptional collection $\{ \sO_{\mathbb P^1}(-1), \sO_{\mathbb P^1} \}$. We may obtain the full exceptional collection $\langle E_7, \ldots, E_0\rangle$ (see \cite{Orlov}):

 \begin{gather}\label{col}
 \{ \sO_X(-h)[-4], \sO_X(-h_2-h_3)[-4], \sO_X(-h_1-h_3)[-3], \\
 \sO_X(-h_1-h_2)[-2], \sO_X(-h_3)[-2], \sO_X(-h_2)[-1], \sO_X(-h_1), \sO_X\}. \notag
\end{gather}
The associated full exceptional collection $\langle F_7=\mathcal F_7, \ldots, F_0=\mathcal F_0\rangle$ of Theorem \ref{use} is

\begin{equation}\label{cold}
\{\sO_X(-h), \sO_X(-h_2-h_3), \sO_X(-h_1-h_3), \sO_X(-h_1-h_2), \sO_X(-h_3), \sO_X(-h_2), \sO_X(-h_1), \sO_X\}.
\end{equation}

From (\ref{col}) with a  left mutation of the pair  $\{\sO_X(-h_1), \sO_X\}$ we obtain:

 \begin{gather}\label{col01}
 \{ \sO_X(-h)[-4], \sO_X(-h_2-h_3)[-4], \sO_X(-h_1-h_3)[-3], \\
 \sO_X(-h_1-h_2)[-2], \sO_X(-h_3)[-2], \sO_X(-h_2)[-1], \sO_X(-2h_1), \sO_X(-h_1) \}. \notag
\end{gather}

From the above collection with a  left mutation of the pair  $\{\sO_X(-h_2)[-1], \sO_X(-2h_1) \}$ we obtain:

 \begin{gather}\label{col02}
 \{ \sO_X(-h)[-4], \sO_X(-h_2-h_3)[-4], \sO_X(-h_1-h_3)[-3], \\
 \sO_X(-h_1-h_2)[-2], \sO_X(-h_3)[-2],  A[-1], \sO_X(-h_2)[-1],  \sO_X(-h_1) \} \notag
\end{gather}

where $A$ is given by the extension

\begin{equation}\label{colA} 0\to\sO_X(-2h_1)\to A\to \sO_X(-h_2)^{\oplus 2}\to 0.
\end{equation}

From the above collection with a  left mutation of the pair  $\{\sO_X(-h_3), A \}$ we obtain:

 \begin{gather}\label{col2}
 \{ \sO_X(-h)[-4], \sO_X(-h_2-h_3)[-4], \sO_X(-h_1-h_3)[-3], \\
 \sO_X(-h_1-h_2)[-2], B[-2], \sO_X(-h_3)[-2], \sO_X(-h_2)[-1],  \sO_X(-h_1) \} \notag
\end{gather}

where $B$ is given by the extension

\begin{equation}\label{colB} 0 \to A\to B\to \sO_X(-h_3)^{\oplus 2}\to 0.
\end{equation}

Making the respective right mutation of \eqref{cold} we obtain the full exceptional collection $\langle F_7=\mathcal F_n, \ldots, F_0=\mathcal F_0\rangle$ of Theorem \ref{use}:

\begin{equation}\label{cold2}\begin{aligned}
\{ \sO_X(-h), \sO_X(-h_2-h_3), \sO_X(-h_1-h_3), \sO_X(-h_1-h_2), \sO_X, \sO_X(h_3), \sO_X(h_2), \sO_X(h_1) \}.
\end{aligned}\end{equation}

It is easy to check that the conditions \eqref{order} are satisfied. Observe that both collections \eqref{col} and \eqref{cold2} are strong.

\begin{theorem}\label{monadeteo}
Let $E$ be a charge $k$ instanton bundle on $X$ with $c_2(E)=k_1e_1+k_2e_2+k_3e_3$, then $E$ is the cohomology of a monad of the form
\begin{equation}\label{monade}
0\rightarrow
\begin{matrix}
\sO_X^{k_3}(-h_1-h_2) \\
\oplus \\
\sO_X^{k_2}(-h_1-h_3) \\
\oplus \\
\sO_X^{k_1}(-h_2-h_3)
\end{matrix}
\rightarrow
\begin{matrix}
\sO_X^{k_2+k_3}(-h_1) \\
\oplus \\
\sO_X^{k_1+k_3}(-h_2) \\
\oplus\\
\sO_X^{k_1+k_2}(-h_3)
\end{matrix}
\rightarrow
\sO_X^{k-2}
\rightarrow
0.
\end{equation}
Conversely any $\mu$-semistable bundle defined as the cohomology of such  a monad is a  charge $k$ instanton bundle.
\end{theorem}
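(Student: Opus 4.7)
Plan: The proof applies the Beilinson-type spectral sequence of Theorem~\ref{use} to $A=E$, using the strong full exceptional collection \eqref{col} together with its right dual \eqref{cold}. By Remark~\ref{rembeil} this yields a complex $L^\bullet$ of direct sums of the line bundles in \eqref{cold} whose only nonzero cohomology is $E$ in degree $0$; the task is to show that $L^\bullet$ reduces to the three-term monad \eqref{monade}.

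The first and main step is to establish vanishings $H^q(\sE_{-p}\otimes E)=0$ for all pairs $(p,q)$ except those that feed the three surviving terms of \eqref{monade}. I would combine the following inputs: the instanton hypotheses $H^0(E)=0$ and $H^1(E(-h))=0$; Serre duality on $X$, using $E^\vee\cong E$ (since $\rango(E)=2$ and $c_1(E)=0$) and $\omega_X=\sO_X(-2h)$, to transfer vanishings between $H^i$ and $H^{3-i}$; Hoppe's criterion (Proposition~\ref{hoppe}) to force $H^0(E\otimes\sO_X(B))=0$ for every $B$ with $\deg_h B<0$; and the divisor restriction sequences
\[
0\to E(-h_i)\to E\to E|_{H_i}\to 0
\]
along $H_i\in|h_i|\cong\pp^1\times\pp^1$, to propagate the instantonic vanishing to twists such as $E(-h_i-h_j)$ and $E(-h_i)$. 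With these in hand the spectral sequence collapses onto three anti-diagonals $p+q\in\{-1,0,1\}$, leaving a complex $L^{-1}\to L^0\to L^1$ whose terms involve exactly the line bundles $\sO_X(-h_i-h_j)$, $\sO_X(-h_i)$ and $\sO_X$. The multiplicities are then read off from the Euler characteristics via \eqref{RR}: once only a single $H^q$ is nonzero for each of the twists $E$, $E(h_i)$ and $E(h_i+h_j)$, a short Riemann--Roch calculation recovers the numbers $k-2$, $k_i+k_j$ and $k_\ell$ appearing in \eqref{monade}.

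For the converse, let $E$ be $\mu$-semistable and realized as the cohomology of a complex of the stated shape. Additivity of Chern classes along the display of the monad gives $c_1(E)=0$ and $c_2(E)=k_1e_1+k_2e_2+k_3e_3$, while the vanishings $H^0(E)=0$ and $H^1(E(-h))=0$ follow by splitting the monad into two short exact sequences and chasing cohomology, using that $H^0(\sO_X(B))=0$ and $H^1(\sO_X(B-h))=0$ for every line-bundle summand $\sO_X(B)$ appearing in \eqref{monade}.

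I expect the chief obstacle to be the vanishing step: promoting the single instantonic vanishing $H^1(E(-h))=0$ to the full package of intermediate vanishings $H^1(E(-h_i))=H^2(E(-h_i))=0$ and $H^1(E(-h_i-h_j))=H^2(E(-h_i-h_j))=0$ needed to collapse the spectral sequence to only three nonzero columns. Here the divisor restriction sequences above, combined with K\"unneth on $H_i\cong\pp^1\times\pp^1$ and the $\mu$-semistability of $E|_{H_i}$, should do most of the work.
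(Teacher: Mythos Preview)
Your overall strategy coincides with the paper's: apply the Beilinson spectral sequence of Theorem~\ref{use} to $E$ with the collections \eqref{col}--\eqref{cold}, establish enough vanishings so that only one cohomology group survives for each relevant twist, and then read off the multiplicities by Riemann--Roch. The converse you sketch is also exactly what the paper does.

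However, your final paragraph contains a genuine error. You write that the goal is to prove $H^1(E(-h_i))=H^2(E(-h_i))=0$ and $H^1(E(-h_i-h_j))=H^2(E(-h_i-h_j))=0$. But the $H^1$ groups are \emph{not} zero: they are precisely the nonvanishing groups that populate the monad, with $h^1(E(-h_i))=k_j+k_\ell$ and $h^1(E(-h_i-h_j))=k_\ell$. What you actually need is that for each of the twists $E$, $E(-h_i)$, $E(-h_i-h_j)$ only $H^1$ survives, i.e.\ the real work is the $H^2$-vanishings (the $H^0$ and $H^3$ vanishings being immediate from $H^0(E)=0$ and Serre duality). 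If you tried to prove $H^1(E(-h_i))=0$ you would simply fail, and as stated your plan would collapse the spectral sequence to nothing rather than to the desired three-term complex.

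A second, smaller issue: you propose to use the $\mu$-semistability of $E|_{H_i}$ for $H_i\in|h_i|$. This is neither justified (the divisors $H_i$ are not hyperplane sections, and restriction theorems need care) nor necessary. The paper obtains the $H^2$-vanishings cleanly from the pulled-back Euler sequence
\[
0\to\sO_X(-h_a)\to\sO_X^2\to\sO_X(h_a)\to 0
\]
tensored successively by $E(-h)$, $E(-h_a-h_b)$ and $E(-h_a)$, using only $H^i(E(-h))=0$ and the easy $H^0$ and $H^3$ vanishings. Your restriction-sequence idea can also be made to work, but by combining it with Serre duality on the surface $H_i$ rather than by invoking semistability of the restriction; for instance $H^2(E|_{H_a}(-h_b-h_c))\cong H^0(E|_{H_a}(-h_b-h_c))^\ast$, and the latter vanishes directly from the restriction sequence and $H^1(E(-h))=0$.

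Finally, in the converse your blanket claim ``$H^0(\sO_X(B))=0$ for every summand $\sO_X(B)$ of \eqref{monade}'' is false for $B=0$; the argument still goes through because $\sO_X$ sits only in the rightmost term, but you should phrase the cohomology chase accordingly.
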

\begin{proof}
We consider the Beilinson type spectral sequence associated to an instanton bundle $E$ and identify the members of the graded sheaf associated to the induced filtration as the sheaves mentioned in the statement of Theorem \ref{use}.
We consider the full exceptional collection $\langle E_7, \ldots, E_0\rangle$  given in \eqref{col} and the full exceptional collection $\langle F_7, \ldots, F_0\rangle$ given in \eqref{cold}.

First of all, let us observe that since $H^0(E)=0$ we have $H^0(E(-D))=0$ for every effective divisor $D$. Furthermore by Serre's duality we have also $H^2(E(K+D))=0$ for all effective divisors $D$.
Since $c_1(E)=0$ using Serre's duality and $H^1(E(-h))=0$ we obtain
\[
H^i(E(-h))=H^{3-i}(E(-h))=0 \ \text{for all $i$.}
\]
We want to show that for each twist in the table, there's only one non vanishing cohomology group, so that we can use the Riemann-Roch formula to compute the dimension of the remaining cohomology group.
Let us consider the pull-back of the Euler sequence from one of the $\p$ factors
\begin{equation}\label{eulerogenerale}
0\rightarrow \sO_{X}(-h_a) \rightarrow \sO_X^2 \rightarrow \sO_X(h_a) \rightarrow 0
\end{equation}
and tensor it by $E(-h)$. We have
\[
0\rightarrow E(-2h_a-h_b-h_c) \rightarrow E^2(-h) \rightarrow E(-h_b-h_c)\rightarrow 0
\]
with $a,b,c \in \{1,2,3\}$ and they are all different from each other. Since $H^i(E(-h))=0$ for all $i$ and $H^0(E(-2h_a-h_b-h_c))=H^3(E(-2h_a-h_b-h_c))=0$, considering the long exact sequence induced in cohomology we have $H^2(E(-h_b-h_c))=0$. Now we want to show that $H^2(E(-h_a))=0$ for all $a \in \{ 1,2,3\}$. Tensor \eqref{eulerogenerale} by $E(-h_b)$ with $b \neq a$ and we have:
\[
 0\rightarrow E(-2h_a-h_b) \rightarrow E^2(-h_a-h_b) \rightarrow E(-h_b)\rightarrow 0.
 \]
Considering the long exact sequence induced in cohomology we have that $H^2(E(-h_b))=0$ since $H^2(E(-h_a-h_b))=H^3(E(-2h_a-h_b))=0$. Finally if we tensor \eqref{eulerogenerale} by $E(-h_a)$ and we consider the long exact sequence in cohomology, we obtain $H^2(E)=0$.

Now let us compute the Euler characteristic of $E$ tensored by a line bundle $\sO_X(D)$ so that we are able to compute all the numbers in the Beilinson's table. Combining \eqref{chernppp} and \eqref{RR} we have
\begin{equation}\label{rrdivisor}
\chi (E(D))=\frac{1}{6}(2D^3-6c_2(E)D)+h(D^2-c_2(E)) +Dh^2+2.
\end{equation}
By \eqref{rrdivisor} we have
\begin{itemize}
\item
$h^1(E)=-\chi(E)=2-k_1-k_2-k_3=2-k$.
\item $h^1(E(-h_i))=-\chi(E(-h_i))=k_i-k$.
\item $h^1(E(-h_i-h_j))=-\chi(E(-h_i-h_j))=k_i+k_j-k$.
\end{itemize} So we get the following table:
{\tiny
\begin{center}
\renewcommand\arraystretch{2}
\begin{tabular}{ccccccccl}
\multicolumn{1}{c}{$\sO_X(-h)$}  & \multicolumn{1}{c}{$\sO_X(-h_2-h_3)$}  & \multicolumn{1}{c}{$\sO_X(-h_1-h_3)$} & \multicolumn{1}{c}{$\sO_X(-h_1-h_2)$} & \multicolumn{1}{c}{$\sO_X(-h_3)$} &  \multicolumn{1}{c}{$\sO_X(-h_2)$} & \multicolumn{1}{c}{$\sO_X(-h_1)$} & \multicolumn{1}{c}{$\sO_X$} &    \\ \cline{1-8}
\multicolumn{1}{|c|}{0} & \multicolumn{1}{c|}{0}        & \multicolumn{1}{c|}{0}        & \multicolumn{1}{c|}{0} & \multicolumn{1}{c|}{0}   & \multicolumn{1}{c|}{0}  & \multicolumn{1}{c|}{0}  & \multicolumn{1}{c|}{0}   & $h^7$ \\ \cline{1-8}
\multicolumn{1}{|c|}{0} & \multicolumn{1}{c|}{0}        & \multicolumn{1}{c|}{0}        & \multicolumn{1}{c|}{0} & \multicolumn{1}{c|}{0}   & \multicolumn{1}{c|}{0}  & \multicolumn{1}{c|}{0}  & \multicolumn{1}{c|}{0}   & $h^6$ \\ \cline{1-8}
\multicolumn{1}{|c|}{0} & \multicolumn{1}{c|}{$k_1$}        & \multicolumn{1}{c|}{0}        & \multicolumn{1}{c|}{0} & \multicolumn{1}{c|}{0}   & \multicolumn{1}{c|}{0}  & \multicolumn{1}{c|}{0}  & \multicolumn{1}{c|}{0}   & $h^5$ \\ \cline{1-8}
\multicolumn{1}{|c|}{0} & \multicolumn{1}{c|}{0}        & \multicolumn{1}{c|}{$k_2$}        & \multicolumn{1}{c|}{0} & \multicolumn{1}{c|}{0}   & \multicolumn{1}{c|}{0}  & \multicolumn{1}{c|}{0}  & \multicolumn{1}{c|}{0}   & $h^4$ \\ \cline{1-8}
\multicolumn{1}{|c|}{0} & \multicolumn{1}{c|}{0}        & \multicolumn{1}{c|}{0}        & \multicolumn{1}{c|}{$k_3$} & \multicolumn{1}{c|}{$k_1+k_2$}   & \multicolumn{1}{c|}{0}  & \multicolumn{1}{c|}{0}  & \multicolumn{1}{c|}{0}   & $h^3$ \\ \cline{1-8}
\multicolumn{1}{|c|}{0} & \multicolumn{1}{c|}{0}        & \multicolumn{1}{c|}{0}        & \multicolumn{1}{c|}{0} & \multicolumn{1}{c|}{0}   & \multicolumn{1}{c|}{$k_1+k_3$}  & \multicolumn{1}{c|}{0}  & \multicolumn{1}{c|}{0}   & $h^2$ \\ \cline{1-8}
\multicolumn{1}{|c|}{0} & \multicolumn{1}{c|}{0}        & \multicolumn{1}{c|}{0}        & \multicolumn{1}{c|}{0} & \multicolumn{1}{c|}{0}   & \multicolumn{1}{c|}{0}  & \multicolumn{1}{c|}{$k_2+k_3$}  & \multicolumn{1}{c|}{$k-2$}   & $h^1$ \\ \cline{1-8}
\multicolumn{1}{|c|}{0} & \multicolumn{1}{c|}{0}        & \multicolumn{1}{c|}{0}        & \multicolumn{1}{c|}{0} & \multicolumn{1}{c|}{0}   & \multicolumn{1}{c|}{0}  & \multicolumn{1}{c|}{0}  & \multicolumn{1}{c|}{0}   & $h^0$ \\ \cline{1-8}
\multicolumn{1}{c}{ $E(-h)[-4]$}  & \multicolumn{1}{c}{$E(-h_2-h_3)[-4]$}  & \multicolumn{1}{c}{$E(-h_1-h_3)[-3]$}  & \multicolumn{1}{c}{$E(-h_1-h_2)[-2]$}  & \multicolumn{1}{c}{ $E(-h_3)[-2]$} & \multicolumn{1}{c}{ $E(-h_2)[-1]$} & \multicolumn{1}{c}{ $E(-h_1)$} & \multicolumn{1}{c}{ $E$} &
\end{tabular}
\label{tabella2}
\end{center}}

Using Beilinson's theorem in the strong form (as in Remark \ref{rembeil}) we retrieve the monad \eqref{monade}.

Conversely let $E$ be a $\mu$-semistable bundle defined as the cohomology of a monad (\ref{monade}). We may consider the two short exact sequences:
\begin{equation}\label{m1}
0\rightarrow G \rightarrow
\begin{matrix}
\sO_X^{k_2+k_3}(-h_1) \\
\oplus \\
\sO_X^{k_1+k_3}(-h_2) \\
\oplus\\
\sO_X^{k_1+k_2}(-h_3)
\end{matrix}
\rightarrow \sO_X^{k-2} \rightarrow 0
\end{equation}
and
\begin{equation}\label{m2}
0 \rightarrow
\begin{matrix}
\sO_X^{k_3}(-h_1-h_2) \\
\oplus \\
\sO_X^{k_2}(-h_1-h_3) \\
\oplus \\
\sO_X^{k_1}(-h_2-h_3)
\end{matrix}
\rightarrow
G
\rightarrow
E
\rightarrow 0.
\end{equation}

We deduce that $H^0(G)=H^0(E)=0$.
%By (\ref{m1}) and (\ref(m2)) tensored by $\sO_X(a,b,c)$ with $a+b+c<0$ we obtain $H^0(G(a,b,c))=H^0(E(a,b,c))=0$ so $E$ is $\mu$-semistable.
By (\ref{m1}) and (\ref{m2}) tensored by $\sO_X(-h)$ we obtain $H^1(G(-h))=H^1(E(-h))=0$ so $E$ is an instanton.
\end{proof}

\begin{prop}
Let $E$ be an instanton bundle on $X$, then $h^1(E(-h-D))=0$ for every effective divisor $D$.
\end{prop}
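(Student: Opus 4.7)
The plan is to feed the monad \eqref{monade} from Theorem \ref{monadeteo} into a cohomological chase. Calling the three terms of the monad $M_{-1}, M_0, M_1$ (from left to right) and setting $G = \ker(M_0 \to M_1)$, I split the monad into
\begin{equation*}
0 \to M_{-1} \to G \to E \to 0, \qquad 0 \to G \to M_0 \to M_1 \to 0,
\end{equation*}
twist both by $\sO_X(-h-D)$, and take the associated long exact sequences. The desired vanishing $H^1(E(-h-D)) = 0$ will then follow once I verify the three vanishings
\begin{equation*}
H^0(M_1(-h-D)) = H^1(M_0(-h-D)) = H^2(M_{-1}(-h-D)) = 0,
\end{equation*}
since the first two control $H^1(G(-h-D))$ via the second sequence, and $H^1(E(-h-D))$ is sandwiched between $H^1(G(-h-D))$ and $H^2(M_{-1}(-h-D))$ via the first.

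For the three vanishings I use that $X$ is polycyclic, so an effective class can be written $D = a_1 h_1 + a_2 h_2 + a_3 h_3$ with $a_j \ge 0$. Consequently every line bundle summand appearing in $M_{-1}(-h-D)$, $M_0(-h-D)$ and $M_1(-h-D)$ has multidegree $(b_1, b_2, b_3)$ with every $b_j \le -1$. By the Kunneth formula on $X = \p \times \p \times \p$,
\begin{equation*}
H^i(\sO_X(b_1, b_2, b_3)) = \bigoplus_{i_1+i_2+i_3=i} H^{i_1}(\sO_{\p}(b_1)) \otimes H^{i_2}(\sO_{\p}(b_2)) \otimes H^{i_3}(\sO_{\p}(b_3)),
\end{equation*}
and since each $b_j < 0$ forces $H^0(\sO_{\p}(b_j)) = 0$, any nonzero summand must have $i_j = 1$ in every factor, hence $i = 3$. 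Thus $H^0, H^1, H^2$ all vanish on such line bundles, yielding exactly what we need.

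There is no real obstacle here: the only non-formal content is the Kunneth computation on line bundles of strictly negative multidegree, which is immediate. A slightly more conceptual but longer alternative would be to induct on $a_1+a_2+a_3$ using the pullback of an Euler sequence $0 \to \sO_X(-h_i) \to \sO_X^2 \to \sO_X(h_i) \to 0$ at each step, and to kill the auxiliary $H^0$ terms that arise by $\mu$-semistability via Hoppe's criterion (Proposition \ref{hoppe}); I prefer the one-shot monadic argument just outlined.
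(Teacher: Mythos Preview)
Your proof is correct and follows essentially the same route as the paper: both split the monad \eqref{monade} into the two short exact sequences \eqref{m1}, \eqref{m2}, twist by $\sO_X(-h-D)$, and use the K\"unneth formula to kill the relevant cohomology of the line bundle terms (all of which have every multidegree $\le -1$). Your write-up is in fact a bit more precise than the paper's, which states $h^i(\sO_X(-h-D))=0$ ``for all $i$'' when really only $i\le 2$ is guaranteed (and is all that is needed).
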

\begin{proof}
Let us consider the two short exact sequences  \eqref{m1} and \eqref{m2} tensored by $\sO_X(-h+D)$.
By Kunneth formula we have that $h^i(\sO_X(-h-D))=0$ for all $i$, and thus taking the cohomology of \eqref{m1} we get $h^i(G(-h-D))=0$ for $i\neq 3$. Combining this with the induced sequence in cohomology of \eqref{m2} we obtain $h^0(E(-h-D))=h^1(E(-h-D))=0$.
\end{proof}

In the next theorem we obtain a description of instanton bundles as the cohomology of a different monad.

\begin{theorem}\label{monadeteo2}
Let $E$ be a charge $k$ instanton bundle on $X$ with $c_2(E)=k_1e_1+k_2e_2+k_3e_3$, then $E$ is the cohomology of a monad of the form
\begin{equation}\label{monade2}
0\rightarrow
\begin{matrix}
\sO_X^{k_3}(-h_1-h_2) \\
\oplus \\
\sO_X^{k_2}(-h_1-h_3) \\
\oplus \\
\sO_X^{k_1}(-h_2-h_3)
\end{matrix}
\rightarrow
\begin{matrix}
\sO_X^{3k+2}
\rightarrow
\end{matrix}
\begin{matrix}
\sO_X^{k_2+k_3}(h_1) \\
\oplus \\
\sO_X^{k_1+k_3}(h_2) \\
\oplus\\
\sO_X^{k_1+k_2}(h_3)
\end{matrix}
\rightarrow
0.
\end{equation}
Conversely any $\mu$-semistable bundle with $H^0(E)=0$ defined as the cohomology of such  a monad is a  charge $k$ instanton bundle.
\end{theorem}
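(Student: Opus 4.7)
The plan is to mirror the proof of Theorem \ref{monadeteo} by applying the Beilinson spectral sequence in its strong form (Remark \ref{rembeil}), but now using the alternative full exceptional collection \eqref{col2} together with its right dual \eqref{cold2}. These were constructed from \eqref{col} and \eqref{cold} via the successive left/right mutations performed earlier precisely so that the shifted bundle $B[-2]$ replaces $\sO_X$ in the left collection, and on the dual side the trivial summand is swapped for the three positive line bundles $\sO_X(h_i)$ together with $\sO_X$, matching exactly the summands of the target monad \eqref{monade2}.

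The first task is to assemble the cohomology table of $E$ against every term of the new collection. Most entries either coincide with those already computed for Theorem \ref{monadeteo} or follow from them via Serre duality (using $c_1(E)=0$ and $K_X=-2h$) combined with the vanishing $H^1(E(-h-D)) = 0$ for every effective divisor $D$ established in the proposition following Theorem \ref{monadeteo}. In particular, one obtains $H^i(E(h_j)) = H^i(E(h_i+h_j)) = H^i(E(h)) = 0$ for $i \geq 2$, and Riemann-Roch \eqref{RR} then determines the remaining $h^0$ and $h^1$ on these positive twists.

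The principal new computation is $H^\bullet(E \otimes B)$. Tensoring the defining extensions \eqref{colA} and \eqref{colB} by $E$ produces the two short exact sequences
\[
0 \to E(-2h_1) \to E \otimes A \to E(-h_2)^{\oplus 2} \to 0, \qquad 0 \to E \otimes A \to E \otimes B \to E(-h_3)^{\oplus 2} \to 0.
\]
The cohomology of $E(-h_2)$ and $E(-h_3)$ is already recorded in the table of Theorem \ref{monadeteo}; the missing piece $H^\bullet(E(-2h_1))$ is recovered by tensoring the pullback Euler sequence $0 \to \sO_X(-h_1) \to \sO_X^{\oplus 2} \to \sO_X(h_1) \to 0$ with $E(-h_1)$ and chasing cohomology, using the already-known values of $H^\bullet(E(-h_1))$ and $H^\bullet(E)$. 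Substituting all of these into the spectral sequence, only the three terms of \eqref{monade2} survive at the $E_\infty$ page, and the rank count $(3k+2)-k-2k=2$ together with \eqref{chernppp} confirms the Chern classes. The main obstacle I anticipate is precisely this combinatorial bookkeeping: making sure the multiplicity $3k+2$ of $\sO_X$ in the middle term emerges correctly from the interplay of the $B$-column with the positive-twist columns of the new Beilinson table.

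For the converse, suppose $E$ is $\mu$-semistable with $H^0(E) = 0$ and arises as the cohomology of the monad \eqref{monade2}. Writing $G$ for the kernel of the rightmost map, we split \eqref{monade2} into two short exact sequences as in the proof of Theorem \ref{monadeteo} and tensor each by $\sO_X(-h)$. A direct Künneth calculation shows that every summand appearing in the left, middle, and right terms of \eqref{monade2} has totally vanishing cohomology after this twist (each acquires a tensor factor $\sO_{\mathbb{P}^1}(-1)$ that forces the full Künneth product to vanish). Hence $H^\bullet(G(-h)) = 0$ and therefore $H^1(E(-h)) = 0$, establishing the instantonic condition and completing the proof that $E$ is an instanton bundle of charge $k$.
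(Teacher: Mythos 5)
Your overall strategy (the mutated collection \eqref{col2} with its dual \eqref{cold2}, the Beilinson table, and the two-step converse) is the same as the paper's, and your converse argument — splitting the monad as in \eqref{m11}, \eqref{m22}, twisting by $\sO_X(-h)$ and using K\"unneth — is exactly what the paper does and is correct. The gap is in the direct implication, precisely at the point you flag as "combinatorial bookkeeping": your method does not determine the multiplicity $3k+2$. Tensoring the Euler sequence by $E(-h_1)$ gives $0\to E(-2h_1)\to E(-h_1)^{\oplus 2}\to E\to 0$, and chasing cohomology only yields $h^0(E(-2h_1))=h^3(E(-2h_1))=0$ and the Euler characteristic: the rank of the connecting map $H^1(E(-h_1))^{\oplus 2}\to H^1(E)$ is unknown, so $h^1(E(-2h_1))$ and $h^2(E(-2h_1))$ — and hence $h^1(E\otimes B)$ and $h^2(E\otimes B)$ — are not individually pinned down by the instanton conditions. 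The paper's key idea, which your proposal is missing, is that one does not need these groups separately: it computes only $\chi(E\otimes B)=-2-3k$ from \eqref{colA} and \eqref{colB}, observes that the two possibly nonzero entries $a$ and $b$ in the $E\otimes B$ column satisfy $a-b=-2-3k$, and then uses convergence of the spectral sequence (the entry $a$ is hit by no differentials and would survive to $E_\infty$ in nonzero total degree, contradicting convergence to $E$ concentrated in degree $0$) to force $a=0$ and $b=3k+2$. Without this argument, or a substitute direct proof of the relevant vanishing, your table is underdetermined and the middle term of \eqref{monade2} is not justified.

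Two smaller points. First, the positive twists $H^i(E(h_j))$, $H^i(E(h_i+h_j))$, $H^i(E(h))$ never enter the spectral sequence: the $E_1$ terms involve $E$ tensored with the members of the left collection \eqref{col2} (negative twists and $B$), while the bundles of \eqref{cold2} appear only as the coefficients of the monad; moreover the same objection as above applies there — Riemann--Roch alone cannot separate $h^0$ from $h^1$. Second, for the vanishing of the $h^0$ entries in the new table (e.g.\ $h^0(E\otimes B)=0$ via $h^0(E(-2h_1))=h^0(E(-h_2))=h^0(E(-h_3))=0$) the paper invokes $\mu$-semistability through Hoppe's criterion (Proposition \ref{hoppe}); you should make this explicit rather than relying only on $H^0(E)=0$.
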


\begin{proof}
We consider the Beilinson type spectral sequence associated to an instanton bundle $E$ and identify the members of the graded sheaf associated to the induced filtration as the sheaves mentioned in the statement of Theorem \ref{use}.
We consider the full exceptional collection $\langle E_7, \ldots, E_0\rangle$  given in \eqref{col2} and the full exceptional collection $\langle F_7, \ldots, F_0\rangle$ given in \eqref{cold2}.

First of all, let us observe that since since $E$ is $\mu$-semistable, by Hoppe's criterion we have $H^0(E(-D))=0$ for every effective divisor $D$. Furthermore  we have all the vanishing computed in Theorem \ref{monadeteo}
Moreover by \eqref{colA} and \eqref{colB} tensored by $E$ we get
$\chi(E\otimes B)=\chi(E\otimes A)+2\chi(E(-h_3))=\chi(E(-2h_1))+2\chi(E(-h_3))+ 2\chi(E(-h_2))=-2+k_1-k_2-k_3-2(k_1+k_2)-2(k_1+k_3)=-2-3k$.
 So we get the following table:
{\tiny
\begin{center}
\renewcommand\arraystretch{2}
\begin{tabular}{ccccccccl}
\multicolumn{1}{c}{$\sO_X(-h)$}  & \multicolumn{1}{c}{$\sO_X(-h_2-h_3)$}  & \multicolumn{1}{c}{$\sO_X(-h_1-h_3)$} & \multicolumn{1}{c}{$\sO_X(-h_1-h_2)$} & \multicolumn{1}{c}{$\sO_X(-h_3)$} &  \multicolumn{1}{c}{$\sO_X(-h_2)$} & \multicolumn{1}{c}{$\sO_X(-h_1)$} & \multicolumn{1}{c}{$\sO_X$} &    \\ \cline{1-8}
\multicolumn{1}{|c|}{0} & \multicolumn{1}{c|}{0}        & \multicolumn{1}{c|}{0}        & \multicolumn{1}{c|}{0} & \multicolumn{1}{c|}{0}   & \multicolumn{1}{c|}{0}  & \multicolumn{1}{c|}{0}  & \multicolumn{1}{c|}{0}   & $h^7$ \\ \cline{1-8}
\multicolumn{1}{|c|}{0} & \multicolumn{1}{c|}{0}        & \multicolumn{1}{c|}{0}        & \multicolumn{1}{c|}{0} & \multicolumn{1}{c|}{0}   & \multicolumn{1}{c|}{0}  & \multicolumn{1}{c|}{0}  & \multicolumn{1}{c|}{0}   & $h^6$ \\ \cline{1-8}
\multicolumn{1}{|c|}{0} & \multicolumn{1}{c|}{$k_1$}        & \multicolumn{1}{c|}{0}        & \multicolumn{1}{c|}{0} & \multicolumn{1}{c|}{0}   & \multicolumn{1}{c|}{0}  & \multicolumn{1}{c|}{0}  & \multicolumn{1}{c|}{0}   & $h^5$ \\ \cline{1-8}
\multicolumn{1}{|c|}{0} & \multicolumn{1}{c|}{0}        & \multicolumn{1}{c|}{$k_2$}        & \multicolumn{1}{c|}{0} & \multicolumn{1}{c|}{a}   & \multicolumn{1}{c|}{0}  & \multicolumn{1}{c|}{0}  & \multicolumn{1}{c|}{0}   & $h^4$ \\ \cline{1-8}
\multicolumn{1}{|c|}{0} & \multicolumn{1}{c|}{0}        & \multicolumn{1}{c|}{0}        & \multicolumn{1}{c|}{$k_3$} & \multicolumn{1}{c|}{b}   & \multicolumn{1}{c|}{$k_1+k_2$}  & \multicolumn{1}{c|}{0}  & \multicolumn{1}{c|}{0}   & $h^3$ \\ \cline{1-8}
\multicolumn{1}{|c|}{0} & \multicolumn{1}{c|}{0}        & \multicolumn{1}{c|}{0}        & \multicolumn{1}{c|}{0} & \multicolumn{1}{c|}{0}   & \multicolumn{1}{c|}{0}  & \multicolumn{1}{c|}{$k_1+k_3$}  & \multicolumn{1}{c|}{0}   & $h^2$ \\ \cline{1-8}
\multicolumn{1}{|c|}{0} & \multicolumn{1}{c|}{0}        & \multicolumn{1}{c|}{0}        & \multicolumn{1}{c|}{0} & \multicolumn{1}{c|}{0}   & \multicolumn{1}{c|}{0}  & \multicolumn{1}{c|}{0}  & \multicolumn{1}{c|}{$k_2+k_3$}   & $h^1$ \\ \cline{1-8}
\multicolumn{1}{|c|}{0} & \multicolumn{1}{c|}{0}        & \multicolumn{1}{c|}{0}        & \multicolumn{1}{c|}{0} & \multicolumn{1}{c|}{0}   & \multicolumn{1}{c|}{0}  & \multicolumn{1}{c|}{0}  & \multicolumn{1}{c|}{0}   & $h^0$ \\ \cline{1-8}
\multicolumn{1}{c}{ $E(-h)[-4]$}  & \multicolumn{1}{c}{$E(-h_2-h_3)[-4]$}  & \multicolumn{1}{c}{$E(-h_1-h_3)[-3]$}  & \multicolumn{1}{c}{$E(-h_1-h_2)[-2]$} & \multicolumn{1}{c}{ $E\otimes B[-2]$} & \multicolumn{1}{c}{ $E(-h_3)[-2]$} & \multicolumn{1}{c}{ $E(-h_2)[-1]$} & \multicolumn{1}{c}{ $E(-h_1)$}  &
\end{tabular}
\label{tabella3}
\end{center}}
where $a-b=-2-3k$. Since the spectral sequence converges to an object in degree $0$ and there no maps involving $a$ we deduce that $a=0$ and $b=3k+2$.
So we get the following table:
{\tiny
\begin{center}
\renewcommand\arraystretch{2}
\begin{tabular}{ccccccccl}
\multicolumn{1}{c}{$\sO_X(-h)$}  & \multicolumn{1}{c}{$\sO_X(-h_2-h_3)$}  & \multicolumn{1}{c}{$\sO_X(-h_1-h_3)$} & \multicolumn{1}{c}{$\sO_X(-h_1-h_2)$} & \multicolumn{1}{c}{$\sO_X(-h_3)$} &  \multicolumn{1}{c}{$\sO_X(-h_2)$} & \multicolumn{1}{c}{$\sO_X(-h_1)$} & \multicolumn{1}{c}{$\sO_X$} &    \\ \cline{1-8}
\multicolumn{1}{|c|}{0} & \multicolumn{1}{c|}{0}        & \multicolumn{1}{c|}{0}        & \multicolumn{1}{c|}{0} & \multicolumn{1}{c|}{0}   & \multicolumn{1}{c|}{0}  & \multicolumn{1}{c|}{0}  & \multicolumn{1}{c|}{0}   & $h^7$ \\ \cline{1-8}
\multicolumn{1}{|c|}{0} & \multicolumn{1}{c|}{0}        & \multicolumn{1}{c|}{0}        & \multicolumn{1}{c|}{0} & \multicolumn{1}{c|}{0}   & \multicolumn{1}{c|}{0}  & \multicolumn{1}{c|}{0}  & \multicolumn{1}{c|}{0}   & $h^6$ \\ \cline{1-8}
\multicolumn{1}{|c|}{0} & \multicolumn{1}{c|}{$k_1$}        & \multicolumn{1}{c|}{0}        & \multicolumn{1}{c|}{0} & \multicolumn{1}{c|}{0}   & \multicolumn{1}{c|}{0}  & \multicolumn{1}{c|}{0}  & \multicolumn{1}{c|}{0}   & $h^5$ \\ \cline{1-8}
\multicolumn{1}{|c|}{0} & \multicolumn{1}{c|}{0}        & \multicolumn{1}{c|}{$k_2$}        & \multicolumn{1}{c|}{0} & \multicolumn{1}{c|}{0}   & \multicolumn{1}{c|}{0}  & \multicolumn{1}{c|}{0}  & \multicolumn{1}{c|}{0}   & $h^4$ \\ \cline{1-8}
\multicolumn{1}{|c|}{0} & \multicolumn{1}{c|}{0}        & \multicolumn{1}{c|}{0}        & \multicolumn{1}{c|}{$k_3$} & \multicolumn{1}{c|}{3k+2}   & \multicolumn{1}{c|}{$k_1+k_2$}  & \multicolumn{1}{c|}{0}  & \multicolumn{1}{c|}{0}   & $h^3$ \\ \cline{1-8}
\multicolumn{1}{|c|}{0} & \multicolumn{1}{c|}{0}        & \multicolumn{1}{c|}{0}        & \multicolumn{1}{c|}{0} & \multicolumn{1}{c|}{0}   & \multicolumn{1}{c|}{0}  & \multicolumn{1}{c|}{$k_1+k_3$}  & \multicolumn{1}{c|}{0}   & $h^2$ \\ \cline{1-8}
\multicolumn{1}{|c|}{0} & \multicolumn{1}{c|}{0}        & \multicolumn{1}{c|}{0}        & \multicolumn{1}{c|}{0} & \multicolumn{1}{c|}{0}   & \multicolumn{1}{c|}{0}  & \multicolumn{1}{c|}{0}  & \multicolumn{1}{c|}{$k_2+k_3$}   & $h^1$ \\ \cline{1-8}
\multicolumn{1}{|c|}{0} & \multicolumn{1}{c|}{0}        & \multicolumn{1}{c|}{0}        & \multicolumn{1}{c|}{0} & \multicolumn{1}{c|}{0}   & \multicolumn{1}{c|}{0}  & \multicolumn{1}{c|}{0}  & \multicolumn{1}{c|}{0}   & $h^0$ \\ \cline{1-8}
\multicolumn{1}{c}{ $E(-h)[-4]$}  & \multicolumn{1}{c}{$E(-h_2-h_3)[-4]$}  & \multicolumn{1}{c}{$E(-h_1-h_3)[-3]$}  & \multicolumn{1}{c}{$E(-h_1-h_2)[-2]$} & \multicolumn{1}{c}{ $E\otimes B[-2]$} & \multicolumn{1}{c}{ $E(-h_3)[-2]$} & \multicolumn{1}{c}{ $E(-h_2)[-1]$} & \multicolumn{1}{c}{ $E(-h_1)$}  &
\end{tabular}
\label{tabella1}
\end{center}}

Using Beilinson's theorem as in Remark \ref{rembeil} we retrieve the monad \eqref{monade2}.

Conversely let $E$ be a $\mu$-semistable bundle with no global sections defined as the cohomology of a monad \eqref{monade}. We may consider the two short exact sequences:
\begin{equation}\label{m11}
0\rightarrow G \rightarrow
\sO_X^{3k+2}
\rightarrow
\begin{matrix}
\sO_X^{k_2+k_3}(h_1) \\
\oplus \\
\sO_X^{k_1+k_3}(h_2) \\
\oplus\\
\sO_X^{k_1+k_2}(h_3)
\end{matrix}
\rightarrow 0
\end{equation}
and
\begin{equation}\label{m22}
0 \rightarrow
\begin{matrix}
\sO_X^{k_3}(-h_1-h_2) \\
\oplus \\
\sO_X^{k_2}(-h_1-h_3) \\
\oplus \\
\sO_X^{k_1}(-h_2-h_3)
\end{matrix}
\rightarrow
G
\rightarrow
E
\rightarrow 0.
\end{equation}

%We deduce that $H^0(G)=h^0(E)=0$.
%By (\ref{m1}) and (\ref(m2)) tensored by $\sO_X(a,b,c)$ with $a+b+c<0$ we obtain $H^0(G(a,b,c))=H^0(E(a,b,c))=0$ so $E$ is $\mu$-semistable.
By \eqref{m11} and \eqref{m22} tensored by $\sO_X(-h)$ we obtain $H^1(G(-h))=H^1(E(-h))=0$ so $E$ is an instanton.
\end{proof}

\begin{remark}
It is possible to construct vector bundles which are realized as the cohomology of a monad as in Theorem \ref{monadeteo} and \ref{monadeteo2} but that are not $\mu$-semistable. Let us consider a generic line $l$ in the ruling $e_1$. It has the following resolution on $X$
\begin{equation}\label{resolutionline}
0 \to \sO_X(-h_2-h_3) \to \sO_X(-h_2) \oplus \sO_X(-h_3) \to \sO_X \to \sO_{l} \to 0.
\end{equation}
By adjunction formula we have $\sN_{l/X}^\vee \cong \sI_{l|X}\otimes \sO_l$, and using \eqref{resolutionline} we obtain $\sN_{l/X}^\vee \cong \sO_l^2$ and in particular $\det \sN_{l/X} \otimes \sO_l \cong \sO_X(D)\otimes \sO_l$ where $D$ is a divisor of the form $D=ah_2+bh_3$. Choosing $D=2h_2-4h_3$, since $h^2(\sO_X(-D))=0$, it is possible to construct a vector bundle $E$ with $c_1(E)=0$ and $c_2(E)=5e_1$ through the Hartshorne-Serre correspondence (for details see \cite{arrondo}) which fits into
\begin{equation}\label{serrecontroesempio}
0\to \sO_X(-h_2+2h_3)\to E \to \sI_{l|X}(h_2-2h_3)\to 0.
\end{equation}
The vector bundle constructed in this way has no sections, i.e. $H^0(E)=0$ and if we tensor \eqref{serrecontroesempio} by $\sO_X(-h)$ and we take the cohomology, we obtain
\[
H^i(E(-h))\cong H^i(\sI_{l|X}(-h_1-3h_3)).
\]
Now consider the sequence
\begin{equation}\label{idealseq}
0\to \sI_{l|X}\to \sO_X\to \sO_{l}\to 0.
\end{equation}
Tensoring \eqref{idealseq} by $\sO_X(-h_1-3h_3)$, we get $h^1(\sI_{l|X}(-h_1-3h_3))= h^0(\sO_l(-h_1-3h_3))= h^0(\sO_\p(-1))=0$. Thus we obtain $H^1(E(-h))=0$. In this way we constructed a vector bundle $E$ with $c_1(E)=0$ satisfying all the instantonic conditions but the $\mu$-semistability. In fact by Proposition \ref{hoppe} $E$ is not $\mu$-semistable since $H^0(E(h_2-2h_3))\neq 0$. Furthermore $E$ has the same cohomology table of an instanton bundle, thus it is realized as the cohomology of the monads
\[
0 \to \sO_X^5(-h_2-h_3) \to \sO_X^5(-h_2) \oplus \sO_X^5(-h_3) \to \sO_X^3 \to 0
\] 
and 
\[
0 \to \sO_X^5(-h_2-h_3) \to \sO_X^{17}\to \sO_X^5(h_2)\oplus \sO_X^5(h_3) \to 0.
\]
\end{remark}
\begin{remark}
Let us remark that the monad \eqref{monade2} is the analog of the monad for instanton bundles on $\mathbb P^3$
$$
0 \to \sO_{\mathbb P^3}(-1)^{\oplus k} \xrightarrow{\alpha}  \sO_{\mathbb P^3}^{\oplus 2k+2}
\xrightarrow{\beta} \sO_{\mathbb P^3}(1)^{\oplus k} \to 0,$$
and the monad (\ref{monade}) is the analog of the second monad for instanton bundles on $\mathbb P^3$
(see for instance \cite{AO} display $(1.1)$)
$$
0 \to \sO_{\mathbb P^3}(-1)^{\oplus k} \xrightarrow{\alpha}  \Omega_{\mathbb P^3}(1)^{\oplus k}
\xrightarrow{\beta} \sO_{\mathbb P^3}^{\oplus 2k-2} \to 0.$$
A very similar behavior was shown for the two monads for instanton bundles on the flag threefold in \cite{MMP}.

As in the case of instanton bundles on the projective space and flag varieties, the two monads \eqref{monade2} and \eqref{monade} are closely related. Indeed, sequence \eqref{m11} fits in the following commutative diagram
$$
\xymatrix{
& 0 \ar[d] & 0  \ar[d] & 0 \ar[d] \\
0 \ar[r] & G \ar[d] \ar[r] & \sO^{\oplus 3k+2} \ar[d] \ar[r]^(.3){\beta} &  \oplus_{i\in\mathbb Z_3}\sO_X^{k_{i+1}+k_{i+2}}(h_i) \ar[r] \ar[d] & 0\\
0 \ar[r] & \oplus_{i\in\mathbb Z_3}\sO_X^{k_{i+1}+k_{i+2}}(-h_i) \ar[d] \ar[r] & \sO^{\oplus 4k} \ar[d] \ar[r]^(.3){\beta'} &  \oplus_{i\in\mathbb Z_3}\sO_X^{k_{i+1}+k_{i+2}}(h_i) \ar[r] \ar[d] & 0\\
0 \ar[r] & \sO^{\oplus k-2} \ar[d] \ar[r] & \sO^{\oplus k-2} \ar[d] \ar[r] & 0 \\
& 0 & 0
}
$$
 So we get sequence \eqref{m1} as the first column. Moreover sequence \eqref{m22} fits in the following commutative diagram
$$
\xymatrix{
& 0 \ar[d] & 0 \ar[d] & 0 \ar[d] \\
0 \ar[r] & \oplus_{i\in\mathbb Z_3}\sO_X^{k_{i}}(-h_{i+1}-h_{i+2}) \ar[r] \ar[d] & G \ar[r] \ar[d] & E \ar[r] \ar[d] & 0 \\
0 \ar[r] & \oplus_{i\in\mathbb Z_3}\sO_X^{k_{i}}(-h_{i+1}-h_{i+2}) \ar[r] \ar[d] & \oplus_{i\in\mathbb Z_3}\sO_X^{k_{i+1}+k_{i+2}}(-h_i) \ar[r] \ar[d] & C \ar[r] \ar[d] & 0 \\
 & 0\ar[r] & \sO^{\oplus k-2}\ar[r]\ar[d] & \sO^{\oplus k-2} \ar[r] \ar[d]& 0\\
 & & 0 & 0
}
$$
which is the display of monad \eqref{monade}.

Finally, for the monad \eqref{monade2} is not necessary the assumption $H^0(E)=0$. Exactly the same behavior was shown for the analog monad on $F(0,1,2)$ (see \cite{MMP} Theorem 4.2). 
\end{remark}

We end this section by characterizing the strictly Gieseker semistable instanton bundles on $X$
\begin{prop}\label{semistable}
Let $E$ be an instanton bundle of charge $k$. If $E$ is not $\mu$-stable then $k=2l^2$ for some $l\in \mathbb{Z}$, $l \neq 0$. Moreover $c_2(E)=2l^2e_i$, $i=1,2,3$ and $E$ can be constructed as an extension
\begin{equation}\label{ext1}
0\rightarrow \sO_X(-lh_i+lh_j) \rightarrow E \rightarrow \sO_X(lh_i-lh_j)\rightarrow 0
\end{equation}
with $i \neq j$.
\end{prop}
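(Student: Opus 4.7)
The plan combines Hoppe's criterion with a Chern class identity and a reduced Hilbert polynomial comparison.

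\emph{Setup.} Since $E$ is $\mu$-semistable of slope $0$ but not $\mu$-stable, Proposition~\ref{hoppe} produces $B\in\Pic(X)$ with $\delta_h(B)=0$ and $H^0(E(B))\neq 0$. Setting $D=-B=d_1h_1+d_2h_2+d_3h_3$ gives $d_1+d_2+d_3=0$ together with an inclusion $\sO_X(D)\hookrightarrow E$, and the instanton condition $H^0(E)=0$ forces $D\neq 0$. Passing to the saturation produces a short exact sequence
\begin{equation*}
0\to\sO_X(D)\to E\to\sI_Z(-D)\to 0
\end{equation*}
with $Z$ of codimension $\geq 2$ and $\deg_h D=0$ preserved by $\mu$-semistability.

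\emph{Numerical constraints.} From $c_2(E)=-D^2+[Z]$, intersecting with $h$ and using $(d_1+d_2+d_3)^2=0$ (so that $-D^2\cdot h=\sum d_i^2$), I obtain the identity
\begin{equation*}
k \,=\, d_1^2+d_2^2+d_3^2+\deg_h Z,
\end{equation*}
which already yields $\sum d_i^2\leq k$. A parallel Riemann--Roch calculation for the reduced Hilbert polynomials gives
\begin{equation*}
p_E(t)-p_{\sO_X(D)}(t) \,=\, \tfrac{1}{2}\!\left(\textstyle\sum d_i^2 - k\right)\!(t+1) - d_1d_2d_3.
\end{equation*}

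\emph{Eliminating $Z$ and pinning down $D$.} Combining the failure of $\mu$-stability with the instanton vanishings $H^0(E)=H^1(E(-h))=0$, the bundle $E$ is actually Gieseker semistable, so $p_{\sO_X(D)}(t)\leq p_E(t)$ forces $\sum d_i^2\geq k$. Together with the first identity this gives $\sum d_i^2=k$ and $\deg_hZ=0$, hence $Z=\emptyset$, and matching the constant terms of the two polynomials yields $d_1d_2d_3=0$. Since $d_1+d_2+d_3=0$ and $D\neq 0$, exactly one coordinate vanishes (two vanishing would force $D=0$) and the remaining two are opposite nonzero integers $\pm l$. Hence $D=-lh_i+lh_j$ for some $i\neq j$, $c_2(E)=-D^2=2l^2 e_m$ (where $m$ is the third index), and $k=2l^2$, so $E$ sits in the claimed extension.

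The main obstacle is the Hilbert polynomial comparison: establishing Gieseker semistability of the instanton $E$ (rather than merely $\mu$-semistability), which is what supplies the inequality $\sum d_i^2\geq k$ and, coupled with the elementary identity $\sum d_i^2+\deg_h Z=k$, simultaneously kills the excess subscheme $Z$ and forces the rigid form $D=l(h_i-h_j)$.
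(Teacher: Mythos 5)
Your proposal follows the same overall route as the paper (Hoppe's criterion, saturation to get $0\to\sO_X(D)\to E\to\sI_Z(-D)\to 0$, and a Chern class/Hilbert polynomial comparison), but the decisive last step does not work as stated. Once you know $d_1^2+d_2^2+d_3^2=k$, the reduced Hilbert polynomials $p_{\sO_X(D)}$, $p_E$, $p_{\sO_X(-D)}$ differ only in their constant terms, and the semistability comparisons (subsheaf and quotient alike) each yield only the inequality $d_1d_2d_3\le 0$ — ``matching the constant terms'' cannot produce the equality $d_1d_2d_3=0$, and failure of $\mu$-stability does not give you a Gieseker-destabilizing subobject with equal reduced Hilbert polynomial either. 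Concretely, $(d_1,d_2,d_3)=(1,1,-2)$ satisfies everything you have established ($\sum d_i=0$, $\sum d_i^2=k=6$, $d_1d_2d_3<0$) but gives $c_2(E)=4e_1+4e_2-2e_3$, which is not of the claimed shape. The missing ingredient is precisely the paper's final input: for an instanton each component $k_i$ of $c_2(E)$ is the dimension of a cohomology group in the Beilinson table of Theorem \ref{monadeteo}, hence $k_i\ge 0$; since here $k_1=-2d_2d_3$, $k_2=-2d_1d_3$, $k_3=-2d_1d_2$, all pairwise products $d_id_j$ are $\le 0$, which together with $d_1+d_2+d_3=0$ and $D\ne 0$ forces one $d_i$ to vanish and the other two to be $\pm l$. (Alternatively, the two-positive--one-negative pattern can be excluded from $H^1(E(-h))=0$ via K\"unneth applied to $H^1(\sO_X(D-h))$, but some such argument must be supplied.)

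Two further points. First, $\deg_h Z=0$ only kills the one-dimensional components of $Z$: a nonempty zero-dimensional $Z$ is invisible to your $c_2$ identity and must be ruled out separately (for instance, $c_3(E)=0$ for a rank two bundle with $c_1=0$ forces the length of the punctual part to be zero), whereas the paper's comparison $P_{\sO_Z}\le P_{\sO_X(2B')}-P_{\sO_X}<0$ for $t\gg 0$ contradicts Serre vanishing for any nonempty $Z$ at once. Second, your claim that $\mu$-semistability plus the instanton vanishings upgrades to Gieseker semistability is asserted without proof; the paper invokes Gieseker semistability at the same point, so this is not a divergence from its argument, but as written it does not follow from the hypotheses you list.
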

\begin{proof}
Suppose $H^0(X,E(ah_1+bh_2-(a+b)h_3))\neq 0$ for some $a,b\in \mathbb{Z}$. So $E$ fits into an exact sequence
\[
0 \rightarrow \sO_X \rightarrow  E(ah_1+bh_2-(a+b)h_3) \rightarrow \mathcal{I}_Z(2ah_1+2bh_2-2(a+b)h_3)\rightarrow 0
\]
where $Z\subset X$ is a subscheme of $X$. Since $H^0(E(ah_1+bh_2-(a+b)h_3)\otimes \sO_X(-h_j))=0$ for all $j=1,2,3$ by Proposition \ref{hoppe}, we have that $Z\subset X$ is either empty or purely 2-codimensional. Suppose we are dealing with the latter case, since $E$ in Gieseker semistable we have that
\[
P_{\sO_X}(t)\leq P_{E(ah_1+bh_2-(a+b)h_3)}(t) \leq P_{\mathcal{I}_Z(2ah_1+2bh_2-2(a+b)h_3)}(t)
\]
and
\[
P_{\mathcal{I}_Z(2ah_1+2bh_2-2(a+b)h_3)}(t)=P_{\sO_X(2ah_1+2bh_2-2(a+b)h_3)}(t)-P_{\sO_Z(2ah_1+2bh_2-2(a+b)h_3)}(t)
\]
where $P(t)$ is the Hilbert polynomial. So we have
\begin{align*}
P_{\sO_Z(2ah_1+2bh_2-2(a+b)h_3)}(t) & \leq P_{\sO_X(2ah_1+2bh_2-2(a+b)h_3)}(t)-P_{\sO_X}(t) \\
&= (2a+t+1)(2b+t+1)(t+1-2a-2b)-(t+1)^3 \\
&= -4(t+1)(a^2+b^2+ab) <0 \ \text{for $t>>0$}.
\end{align*}
contradicting Serre's vanishing theorem. Se we can conclude that $Y$ is empty and $E$ fits into
\[
0 \rightarrow \sO_X(-ah_1-bh_2+(a+b)h_3) \rightarrow E \rightarrow \sO_X(ah_1+bh_2-(a+b)h_3)\rightarrow 0.
\]
Now computing $c_2(E)$ we obtain
\begin{align*}
c_2(E)&=(-ah_1-bh_2+(a+b)h_3)\cdot (ah_1+bh_2-(a+b)h_3) \\
&=2b(a+b)e_1+2a(a+b)e_2-2abe_3.
\end{align*}
Since $E$ is an instanton bundle on $X$, all the summands of $c_2(E)$ must be nonnegative. In fact as we will see in Proposition \ref{monadeteo} they represent the dimension of a cohomology group. So either $a$ or $b$ is 0 (but not both since the charge $k$ must be greater than two) or $a=-b$. In all three cases we obtain the desired result.
\end{proof}

\section{Splitting behaviour of Ulrich bundles}
In the next sections we will construct, through an induction process, stable k-instanton bundles on $X$ for each charge k and all second Chern classes. Let us begin with the following remark which explain the relation between Ulrich and instanton bundles.

\begin{remark}
Notice that when the charge is minimal, namely $k=2$, any instanton bundle is, up to twist, an Ulrich bundle. From \cite{pppacm} we get that any rank two Ulrich bundle with $c_1=2h$ is an instanton bundle twisted by $h$. However the rank two Ulrich bundle with $c_1=h_1+2h_2+3h_3$ cannot be the twist of an instanton bundle. On the two Fano threefold with index two and Picard number two all the instanton bundle with minimal degree of $c_2(E)$ is a twist of an Ulrich bundle (see \cite{MMP} and \cite{CCGM}). If $E$ is an instanton bundle on a Fano threefold $X$ with index two and Picard number one we have $h^1(E)=c_2(E)-2$ and $h^0(E)=h^2(E)=h^3(E)=0$ (see section 3 of \cite{Fa2}). So when the charge is minimal, namely $c_2(E)=2$, by Serre duality we have $h^i(E)=h^i(E(-1))=h^i(E(-2))=0$ for any $i$ so $E$ is Ulrich up to a twist. On the three dimensional quadric (the only case of index three) the instanton bundle of minimal charge is the spinor bundle which is Ulrich.   The  monad for instanton bundles on $\mathbb P^3$ is
$$
0 \to \sO_{\mathbb P^3}(-1)^{\oplus k} \xrightarrow{\alpha}  \sO_{\mathbb P^3}^{\oplus 2k+2}
\xrightarrow{\beta} \sO_{\mathbb P^3}(1)^{\oplus k} \to 0,$$
so when $k=0$ we get $E=\sO_{\mathbb P^3}^{\oplus 2}$. This is not an instanton bundle because is not simple. However, from a monadic point of view, we may say that the trivial bundle $E=\sO_{\mathbb P^3}^{\oplus 2}$, which is Ulrich, is a limit case.
The case of index one is much more complicated (see section 4 of \cite{Fa2}).
\end{remark}

Let us consider the base case of induction, which consists of charge 2 instantons on $X$, i.e. rank two Ulrich bundles (up to twisting by $\sO_X(-h)$). For further details about Ulrich bundles on $\ppp$ see \cite{p1p1p1}. We have two possible alternatives for the second Chern class of an Ulrich bundle:
\begin{itemize}
\item [ (a)] $c_2(E)= 2e_i$ for some $i \in \{1,2,3\}$.
\item [ (b)] $c_2(E)=e_i+e_j$ with $i\neq j$.
\end{itemize}
We show that in both in cases the generic Ulrich bundle has trivial restriction with respect to a generic line of each family. In both cases we have $\Ext^2(E,E)=\Ext^3(E,E)=0$ by \cite[Lemma 2.3]{p1p1p1}.
\begin{prop}
The generic instanton bundle of minimal charge $k=2$ has trivial restriction with respect to the generic line of each family $|e_1|$, $|e_2|$ and $|e_3|$.
\end{prop}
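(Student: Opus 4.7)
The plan is to use upper semicontinuity to reduce the statement to the exhibition of a single pair with trivial restriction. Since $c_1(E)=0$ and $l\cong\mathbb{P}^1$, Grothendieck's splitting gives $E|_l\cong\sO_{\mathbb{P}^1}(a)\oplus\sO_{\mathbb{P}^1}(-a)$ with $a\ge 0$, so triviality is equivalent to $h^0(l,E|_l(-1))=0$. This is open in $(E,l)$, hence it suffices, for each admissible $c_2$ and each family $|e_i|$ with $i\in\{1,2,3\}$, to produce a single pair $(E_0,l_0)$ with $E_0|_{l_0}\cong\sO_{\mathbb{P}^1}^{\,2}$.

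Case (a), $c_2(E)=2e_i$. I plan to use the strictly semistable Ulrich bundles provided by Proposition \ref{semistable}, namely extensions $0\to\sO_X(-h_j+h_k)\to E\to\sO_X(h_j-h_k)\to 0$ with $\{i,j,k\}=\{1,2,3\}$. For $l\in|e_i|$ both line bundles restrict to $\sO_{\mathbb{P}^1}$, and since $\Ext^{1}_{\mathbb{P}^1}(\sO,\sO)=0$ the sequence splits on $l$, giving $E|_l\cong\sO^{2}$. For $l\in|e_j|$ (resp.~$|e_k|$) the restricted sequence has shape $0\to\sO(-1)\to E|_l\to\sO(1)\to 0$; the restriction map sends the extension class in $H^1(X,\sO(-2h_j+2h_k))$ to $H^1(l,\sO(-2))=\mathbb{C}$, and by Künneth this is surjective for the generic $l$ and a generic class, so for a generic extension one lands in the non-split branch of the dichotomy on $\mathbb{P}^1$ and $E|_l\cong\sO^{2}$.

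Case (b), $c_2(E)=e_i+e_j$ with $i\ne j$. Let $\ell$ denote the remaining index. I plan to work with the monad \eqref{monade}, which here reduces to the resolution $0\to\sO_X(-h_i-h_\ell)\oplus\sO_X(-h_j-h_\ell)\xrightarrow{\phi}\sO_X(-h_i)\oplus\sO_X(-h_j)\oplus\sO_X^{\,2}(-h_\ell)\to E\to 0$. Restricting the complex to a line of each of the three families and tensoring by $\sO_l(-1)$, the vanishing of $h^0(l,E|_l(-1))$ follows from the induced long exact sequence provided that a certain small sub-block of $\phi|_l$ has maximal rank: a single non-vanishing scalar entry for $l\in|e_i|$ or $l\in|e_j|$, and the invertibility of a $2\times 2$ constant block extracted from $\phi|_l$ for $l\in|e_\ell|$. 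Both are generic conditions on $(\phi,l)$.

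The main obstacle is the family $|e_\ell|$ in case (b): there both source summands restrict to $\sO_{\mathbb{P}^1}(-1)$ and the ``central'' target summand $\sO_X^{\,2}(-h_\ell)$ also restricts to $\sO_{\mathbb{P}^1}(-1)^{2}$, so the restricted complex is genuinely non-trivially twisted and the splitting type of the cokernel is controlled by a delicate rank condition rather than by a naive counting of degrees. One then has to identify precisely which subblock of $\phi|_l$ governs the splitting and to verify that its entries correspond to independent Künneth/Beilinson coefficients in $\phi$, so that they are in general position for generic monad data and generic line, yielding the required invertibility.
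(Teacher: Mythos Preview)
Your reduction via semicontinuity is fine, but note that producing \emph{different} witnesses $E_0$ for the three families only gives three nonempty open subsets of the parameter space; to conclude that the generic instanton lies in their intersection you must also know that this space is irreducible. This is true (e.g.\ by working over the open set of monad data, which sits in a vector space), but it should be said.

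\textbf{Case (a) contains a genuine error.} Fix the extension $0\to\sO_X(-h_j+h_k)\to E\to\sO_X(h_j-h_k)\to 0$. A line $l\in|e_j|$ is parametrized by the $j$-th factor, so $h_j|_l=1$ and $h_k|_l=0$; then indeed $\sO_X(-h_j+h_k)|_l\cong\sO_{\mathbb P^1}(-1)$ and your extension-class argument applies. But a line $l\in|e_k|$ is parametrized by the $k$-th factor, so $h_k|_l=1$ and $h_j|_l=0$; now $\sO_X(-h_j+h_k)|_l\cong\sO_{\mathbb P^1}(+1)$ and the restricted sequence is $0\to\sO(1)\to E|_l\to\sO(-1)\to 0$, which \emph{always} splits since $\Ext^1_{\mathbb P^1}(\sO(-1),\sO(1))=H^1(\sO(2))=0$. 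Hence every line of $|e_k|$ is a jumping line for this $E$, and no choice of extension class helps. The paper notes exactly this and therefore abandons the strictly semistable bundles for families other than $|e_i|$: it uses that a $k=2$ instanton with $c_2=2e_i$ is the pullback of a rank two bundle on the quadric $Q=\mathbb P^1\times\mathbb P^1$, and invokes \cite[Lemma 2.5]{chavez} to deform to a stable bundle on $Q$ that is trivial on the generic line of each ruling. You can salvage your approach by using the opposite extension (sub $\sO_X(h_j-h_k)$) to handle $|e_k|$ and then invoking irreducibility of the monad space to intersect the three opens; but as written the argument for $|e_k|$ fails.

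\textbf{Case (b)} is a genuinely different route from the paper, which constructs these bundles via Hartshorne--Serre from elliptic curves of degree $8$ on a hyperplane section and tracks their intersections with lines geometrically. Your monad approach is more direct and works: writing $\phi$ in block form one sees immediately that the entries governing the relevant sub-blocks are independent sections of $\sO_X(h_1)$, $\sO_X(h_2)$, $\sO_X(h_3)$, so the required non-vanishing (for $|e_i|$, $|e_j|$) and $2\times 2$ invertibility (for $|e_\ell|$) are Zariski-open and visibly nonempty. This is shorter than the paper's argument; the paper's geometric construction, on the other hand, yields the Hilbert-scheme information used elsewhere.
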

\begin{proof}
We will separate the proof analyzing both cases $(a)$ and $(b)$. 

{\bf{Case (a)}}

Let us begin with the first case. By Theorem \ref{monadeteo} we see that every rank two Ulrich bundle with this second Chern class is the pullback from a quadric $Q=\p \times \p$. In this case, by Proposition \ref{semistable}, there exist strictly semistable Ulrich bundle realized as extensions
\begin{equation}\label{estensioneUlrich}
0 \rightarrow \sO_X(h_j-h_k) \rightarrow E \rightarrow \sO_X(h_k-h_j)\rightarrow 0
\end{equation}
with $j\neq k \neq i \neq j$. For these vector bundles, by restricting \eqref{estensioneUlrich} to a line in each family, we observe that in the family $h_jh_k$ there are not jumping lines, i.e. $E_l=\sO_l^2$ for each $l\in \left| h_jh_k\right|$. On the other hand, $E_l$ is never trivial when $l \in \left| h_ih_k\right|$ or $l\in \left| h_jh_i\right|$. However the generic bundle will be stable, so let us focus on stable Ulrich bundles. They are pull back via the projection on the quadric, of stable bundles on $Q$. By \cite[Lemma 2.5]{chavez} every such bundle can be deformed to a stable bundle which is trivial when restricted to the generic line of each family.

{\bf{Case (b)}}

Now let us consider the second case. The details of what follows can be found in \cite{pppacm}. Up to a permutation of the indices we can assume $c_2(E)=e_2+e_3$. Let us denote by $H$ a general hyperplane section in $\mathbb{P}^7$ and let $S$ be $S=X\cap H$. $S$ is a del Pezzo surface of degree 6, given as the blow up of $\mathbb{P}^2$ in 3 points. Let us denote by $F$ the pullback to $S$ of the class of a line in $\mathbb{P}^2$ and by $E_i$ the exceptional divisors. Take a general curve $C$ of class $3F-E_1$, so that $C$ is a smooth, irreducible, elliptic curve of degree $8$. Moreover we have $h^0(C,\mathcal{N}_{C|X})=16$ and $h^1(C,\mathcal{N}_{C|X})=0$, so the Hilbert scheme $\mathscr{H}=\mathscr{H}^{8t}$ of degree 8 elliptic curves is smooth of dimension 16 \cite[Proposition 6.3]{pppacm} and the general deformation of $C$ in $\mathscr{H}$ is non-degenerate \cite[Proposition 6.6]{pppacm}. Let $\mathcal{C} \subset X \times B \rightarrow B$ a flat family of curves in $\mathscr{H}$ with special fibre $\mathcal{C}_{b_0}\cong C$ over $b_0$. To each curve in the family $\mathcal{C}$ we can associate a rank two vector bundle via the Serre's correspondence:
\begin{equation}\label{serreulrich}
0 \rightarrow \sO_X(-h) \rightarrow E_b \rightarrow I_{{C_b}|X}(h) \rightarrow 0
\end{equation}
where $C_b$ is the curve in $\mathcal{C}$ over $b\in B$. The general fiber $C_b$ correspond via \eqref{serreulrich} to rank two Ulrich bundle of the desired $c_2$.

Now choose a line $L$ in $S$, such that $L\cap C$ is a single point $x$. In order to do so, we deal with the classes of $F$ and $E_i$ in $A^2(X)$. One obtain that the classes of $F$, $E_1$, $E_2$ and $E_3$ are $e_1+e_2+e_3$, $e_1$, $e_2$ and $e_3$ respectively. In particular, there exists a line $L$ in the system $|E_1|$ (corresponding to $|e_1|$ in $A^2(X)$) which it intersect the curve $C$ in the class $3F-E_1$ in one point. It follows that $I_{C|X}(1)\otimes \mathcal{O}_L \cong \sO_x \oplus \sO_L$. Tensoring \eqref{serreulrich} by $\sO_L$ we obtain a surjection
\[
E_{{b_0}_{|L}} \rightarrow  \sO_x \oplus \sO_L\rightarrow 0.
\]
In particular $E_{{b_0}_{|L}}$ cannot be $\sO_L(-t) \oplus \sO_L(t)$ for any $t>0$, thus $E_{{b_0}_{|L}}$ is trivial, which is equivalent to $h^0(L,E_{{b_0}_{|L}}(-1))=0$. By semicontinuity we have that $h^0(L,E_{{b}_{|L}}(-1))=0$ for all $b$ in an open neighborhood of $b_0 \in B$, thus the vector bundle corresponding to the general fiber $C_b$ is trivial over the line $L$. Since this is an open condition on the variety of lines contained in $X$, it takes place for the general line in $|e_1|$.

To deal with the other families of lines let us consider a general quadric $Q$ in $|h_1|$. Let $C$ be a smooth, irreducible, non-degenerate elliptic curve in the class $2e_1+3e_2+3e_3$. $\Pic(Q)\cong\mathbb{Z}^2$ generated by two lines $<l,m>$ which correspond respectively to $e_3$ and $e_2$. Since $Q$ is general then $Z=C\cap Q$ consist of two points. Following the previous strategy, we say that $E$ restricted to a generic line of the family $e_2$ (resp. $e_3$) is trivial if $Z$ is not contained in a line of the ruling $m$ (resp. $l$). As in the previous case, let us consider the del Pezzo surface $S=X\cap H$ with $H$ a general hyperplane section. The intersection between $Q$ and $S$ is a curve in the class $e_2+e_3\in A^2(X)$. Let us denote by $C$ the curve $Y=Q\cap S$. We compute the class of $Y$ in $S$. We have the following short exact sequences
\begin{gather*}
0 \to \sO_X(-2h_1-h_2-h_3) \to \sO_X(-h_1)\oplus \sO_X(-h) \to \sO_X \to I_Y \to 0, \\
0\to I_C \to \sO_X \to \sO_Y \to 0
\end{gather*}
and computing the cohomology we find that $h^1(Y,\sO_Y)=g=0$. In particular we have that $Y$ is a degree two curve of genus $0$ on $S$, thus it must be in the class of $F-E_1$. Furthermore, observe that every line of each ruling of $Q$ intersect $S$ in only one point. Now let us take a general curve $C$ in the class $3F-E_1$, so that $C$ is a smooth, irreducible, elliptic curve of degree $8$. Computing the intersection product between $C$ and $Y$, we see that $C\cap Y$ consists of two points. Those two points cannot lie on a line in $Q$, because each line in $Q$ intersects $S$ only in one point. As before, let $\mathcal{C} \subset X \times B \rightarrow B$ a flat family of curves in $\mathcal{H}$ with special fibre $\mathcal{C}_{b_0}\cong C$ over $b_0$. To each curve in the family $\mathcal{C}$ we can associate a rank two vector bundle via the sequence \eqref{serreulrich}. Let $Z_b=C_b \cap Q$ and denote by $l$ and $m$ the two rulings of $Q$. We observed that $Z_{b_0}$ is not contained in a line either of $l$ or $m$, i.e $C_{b_0}$ intersects the generic line of both $|l|$ and $|m|$ in one point. But the rulings of $Q$ correspond to the rulings $e_2$ and $e_3$ of $X$, thus we can repeat the same argument used for the generic line in $e_1$. In this way we conclude that the vector bundle corresponding to the general fiber $C_b$ is trivial over the generic line of each of the families $|e_1|$, $|e_2|$ and $|e_3|$.
\end{proof}

\section{Construction of instanton bundles of higher charge}
In this section we will construct instanton bundles of every charge generically trivial on lines, through an induction process starting from Ulrich bundles. By doing so, we will also construct a nice component of the moduli space $MI(c_2)$ of instanton bundles on $X$ with fixed $c_2$. 

\begin{theorem}
For each non-negative $k_1,k_2,k_3 \in \mathbb{Z}$ with $k=k_1+k_2+k_3\geq 2$ there exists a $\mu$-stable instanton bundle $E$ with $c_2(E)=k_1e_1+k_2e_2+k_3e_3$ on $X$ such that
\[
\Ext_X^1(E,E)=4k-3, \qquad \Ext_X^2(E,E)=\Ext_X^3(E,E)=0
\]
and such that $E$ is generically trivial on lines.

In particular, there exists inside $MI(k_1e_1+k_2e_2+k_3e_3)$ a generically smooth irreducible component of dimension $4k-3$.
\end{theorem}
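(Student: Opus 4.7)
The plan is induction on the charge $k = k_1 + k_2 + k_3 \geq 2$. The base case $k = 2$ is supplied by the previous section: for $c_2 \in \{2e_i,\ e_i + e_j\}$ the generic rank-two Ulrich bundle (after twisting by $\sO_X(-h)$) is a $\mu$-stable instanton generically trivial on every ruling. The higher Ext vanishings are given by \cite[Lemma 2.3]{p1p1p1}, and the Riemann--Roch computation $\chi(E \otimes E) = 4 - 4k$ on $X$ combined with $\Hom(E,E) = \mathbb{C}$ (which follows from $\mu$-stability) forces $\Ext^1(E,E) = 4k - 3$, as required.

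For the inductive step, assume the theorem for charge $k - 1 \geq 2$, fix $(k_1, k_2, k_3)$ with $k \geq 3$, and choose an index $i$ with $k_i \geq 1$. By induction there is a $\mu$-stable instanton $E'$ with $c_2(E') = c_2(E) - e_i$, generically trivial on lines of each family, and satisfying $\Ext^j(E', E') = 0$ for $j \geq 2$ and $\Ext^1(E', E') = 4(k-1) - 3$. Choose a general line $\ell \in |e_i|$, so that $E'|_\ell \cong \sO_\ell^{\oplus 2}$. The goal is a locally free modification $E$ of $E'$ along $\ell$ with $c_2(E) = c_2(E') + e_i$. The naive elementary transformation $0 \to E \to E' \to i_*\sO_\ell \to 0$ with $E$ the kernel of a generic surjection already gives the correct Chern classes, but the kernel fails to be locally free along $\ell$ (the stalks become of the form $\sI_\ell \oplus \sO_X$). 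The remedy is to realize $E$ via a Hartshorne--Serre extension applied to the curve $C := C' \cup \ell$, where $C'$ is a curve associated to $E'$ by Serre's correspondence and $\ell$ meets $C'$ transversely at a single point, so that $C$ inherits the locally complete intersection structure and $\omega$-compatibility required for Hartshorne--Serre to yield a genuinely locally free sheaf.

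The instanton conditions for $E$ then follow from the defining Serre sequence: the vanishings $H^0(E) = 0$ and $H^1(E(-h)) = 0$ reduce to cohomology vanishings for $\sI_C$, which combine the inductive data on $\sI_{C'}$ with the elementary cohomology of $\sI_\ell$ on $X$. For $\mu$-stability we invoke Hoppe's criterion (Proposition \ref{hoppe}): any destabilizing line subbundle of $E$ would lift to a destabilizing subbundle of $E'$ with the same slope, contradicting the inductive assumption. Generic triviality on lines is preserved because, in each of the three rulings, the sublocus of lines meeting $\ell \cup C'$ is a proper closed subvariety, and off this sublocus $E$ restricts to $E'$, which is generically trivial by induction.

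Feeding the natural short exact sequences relating $E$, $E'$, and $i_*\sO_\ell$ into $\Hom_X(-,E)$ and $\Hom_X(E,-)$, and using $E'|_\ell \cong \sO_\ell^{\oplus 2}$ to evaluate $\Ext^j_X(i_*\sO_\ell,E) = H^j(\ell,\sO_\ell^{\oplus 2})$ together with its Serre dual $\Ext^{3-j}_X(E,i_*\sO_\ell(-2))$, one combines the inductive vanishings $\Ext^j(E',E') = 0$ for $j \geq 2$ to conclude $\Ext^2(E,E) = \Ext^3(E,E) = 0$. The value $\Ext^1(E,E) = 4k - 3$ is then forced by $\chi(E \otimes E) = 4 - 4k$ together with $\Hom(E,E) = \mathbb{C}$. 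The moduli-theoretic statement is a standard consequence: $\Ext^2(E,E) = 0$ gives smoothness of $MI(c_2)$ at $[E]$ and the tangent space has dimension $\Ext^1(E,E) = 4k - 3$. The main technical hurdle is the local-freeness step in the line-addition construction: one must arrange $C = C' \cup \ell$ to be a locally complete intersection with the correct $\omega$-data, and then verify that the resulting bundle is still generically trivial on lines of every family, a condition that fails only on a proper subvariety of the parameter space and hence is preserved for generic choices of $\ell$ and $C'$.
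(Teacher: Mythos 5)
Your skeleton (induction on $k$ with the Ulrich case $k=2$ as base, increasing $c_2$ by $e_i$ along a general line, and recovering $\Ext^1=4k-3$ from $\chi(E,E)=4-4k$ and simplicity) matches the paper, but the heart of the inductive step is not carried out correctly. The paper handles the local-freeness problem in a different and essential way: it keeps the non-locally-free kernel $G$ of a general surjection $E'\to\sO_\ell$, proves $\Ext^2(G,G)=\Ext^3(G,G)=0$ and $\dim\Ext^1(G,G)=4k+1$ directly from the sequence $0\to G\to E'\to\sO_\ell\to 0$, and then shows that a \emph{generic deformation} $F$ of $G$ in $M_X(2,0,c_2(G))$ must be locally free, because sheaves arising as such kernels form a family of dimension only $4k$ (namely $(4k-3)+2+1$) inside a $(4k+1)$-dimensional component; the instanton vanishings and generic triviality on lines then pass to $F$ by semicontinuity, and a delicate analysis of $F^{\vee\vee}$ and $c_3$ rules out non-locally-free limits. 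Your substitute — Hartshorne--Serre applied to $C=C'\cup\ell$ — does not work as stated. For a rank-two bundle with $c_1=0$ on $X$ the relevant Serre sequence is $0\to\sO_X(-h)\to E\to\sI_C(h)\to 0$, and since $\omega_X=\sO_X(-2h)$ this forces $\omega_C\cong\sO_C$; that holds for the curve $C'$ attached to $E'$, but fails for the nodal union with a line, whose dualizing sheaf restricts on $\ell$ to $\sO_\ell(-1)$. So the $\omega$-compatibility you invoke is exactly what breaks, and no locally free sheaf with the desired invariants comes out of that correspondence; moreover the inductive hypothesis hands you the bundle $E'$, not a curve, so the existence of a suitable $C'$ (a section of $E'(h)$ vanishing in pure codimension two, meeting a line transversally in one point) would itself need proof.

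Two further steps in your write-up are incorrect even granting the construction. First, ``off this sublocus $E$ restricts to $E'$'' is false: $E$ and $E'$ are not isomorphic away from $\ell$, and on a line $L$ disjoint from $C$ the Serre sequence only exhibits $E|_L$ as an extension of $\sO_L(1)$ by $\sO_L(-1)$, which need not be trivial — this is precisely why the paper's Ulrich section has to argue via lines meeting the curve in a single point and then spread the condition by semicontinuity. Second, your $\Ext$ computation and your stability argument both use a short exact sequence $0\to E\to E'\to\sO_\ell\to 0$ (or a lifting of subsheaves of $E$ to $E'$), but no such sequence exists for a locally free $E$: the kernel of any surjection $E'\to\sO_\ell$ fails to be locally free along $\ell$, which is the very obstruction you set out to avoid. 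In the paper these $\Ext$ vanishings are proved for $G$ and transferred to the deformation; without either that mechanism or a genuinely functioning curve construction, your inductive step does not go through.
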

\begin{proof}
We will divide the proof in two steps. In the first one we will construct a torsion free sheaf with increasing $c_2$. In the second step we deform it to a locally free sheaf.

{\bf Step 1:} Defining a sheaf $G$ with increased $c_2$.

Let us consider a charge $k$ instanton bundle $E$ on $X$ with $c_2(E)=k_1e_1+k_2e_2+k_3e_3$. Suppose $E_{|_{l_i}}=\sO_{l_i}^{ 2}$, with $l_i$ is a generic line of each family $e_i$ and $\Ext^2(E,E)=\Ext^3(E,E)=0$.

Let us consider the short exact sequence
\begin{equation}\label{ses1}
0\rightarrow G \rightarrow E \rightarrow \sO_{l}\rightarrow 0.
\end{equation}
where $l$ is a general line in the family $e_1$. $G$ is a torsion free sheaf which is not locally free. Using the resolution of $\sO_{l}$:
\begin{equation}\label{risoluzione}
0\rightarrow \sO_X(-h_2-h_3)\rightarrow \sO_X(-h_2) \oplus \sO_X(-h_3) \rightarrow \sO_X \rightarrow \sO_{l} \rightarrow 0
\end{equation}
we obtain $c_1(\sO_{l})=0$ and $c_2(\sO_{l})=-e_1$ so using the sequence \eqref{ses1} we have that $c_1(G)=0$, $c_2(G)=(k_1+1)e_1+k_2e_2+k_3e_3$ and $c_3(G)=0$.

Now, applying the functor $\Hom(E,-)$ to \eqref{ses1} we obtain $\Ext^2(E,G)=0$. In fact we have $\Ext^2(E,E)=0$ by hypothesis and $\Ext^1(E,\sO_l)=0$ by Serre's duality since $E_{|_{l}}=\sO_{l}^{ 2}$. Now apply the contravariant functor $\Hom(-,G)$ to \eqref{ses1}. We have the following sequence
\[
\Ext^2(E,G) \rightarrow \Ext^2(G,G) \rightarrow \Ext^3(\sO_l,G).
\]
Now we show that $\Ext^3(\sO_l,G)=0$ in order to obtain $\Ext^2(G,G)=0$. By Serre's duality we have $\Ext^3(\sO_l,G)=\Hom(G,\sO_l(-2h))$. Consider the spectral sequence
\[
E_2^{p,q}=H^p(X, {\mathcal{E}xt}^q(A,B)) \Rightarrow \Ext^{p+q}(A,B)
\]
 with $A,B \in \text{Coh}(X)$. Setting $A=G$ and $B=\sO_l(-2h)$ we obtain
\[
\Hom(G,\sO_l(-2h))=H^0(\mathcal{H}om(G,\sO_l(-2h))).
\]
Now applying the functor $\mathcal{H}om(-,\sO_l(-2h))$ to the sequence \eqref{ses1}, we obtain
\begin{equation}\label{ses3}
0\rightarrow \mathcal{H}om(\sO_l,\sO_l(-2h))\rightarrow \mathcal{H}om(E,\sO_l(-2h)) \rightarrow \mathcal{H}om(G,\sO_l(-2h))\rightarrow \mathcal{E}xt^1(\sO_l,\sO_l(-2h))\rightarrow 0.
\end{equation}
Now $\mathcal{H}om(\sO_l,\sO_l(-2h))\cong \sO_l(-2h)$, $\mathcal{H}om(E,\sO_l(-2h))\cong \mathcal{H}om(\sO_X,\sO_l) \otimes E_{|_l}^\vee(-2h)\cong\sO_l^2(-2h)$ and $\mathcal{E}xt^1(\sO_l,\sO_l(-2h))\cong N_l(-2h)=\sO_l^2(-2h)$. If we split \eqref{ses3} in two short exact sequences we obtain
\[
0 \rightarrow \sO_l(-2h) \rightarrow \mathcal{H}om(G,\sO_l(-2h)) \rightarrow \sO_l^2(-2h) \rightarrow 0.
\]
We deduce $\mathcal{H}om(G,\sO_l(-2h))\cong \sO_l^3(-2h)$, thus $H^0(\mathcal{H}om(G,\sO_l(-2h)))\cong H^0(\sO_l^3(-2h))= 0$. Finally we obtain $\Ext^3(\sO_l,G)\cong \Hom(G,\sO_l(-2h))=0$ from which it follows $\Ext^2(G,G)=0$. This implies that $M_X(2,0,c_2(G))$ is smooth in the point correspondent to $G$. Now we show that $\Ext^3(G,G)=0$. Applying the contravariant functor $\Hom(-,G)$ to \eqref{ses1} we get a surjection
\[
\Ext^3(E,G)\to \Ext^3(G,G)\to 0.
\]
If we apply $\Hom(E,-)$ to \eqref{ses1} we obtain $\Ext^3(E,G)=\Ext^2(E,\sO_l)$ which vanishes sice $E_{|_l}= \sO_l^2$. Thus $\Ext^3(G,G)=0$ and In particular we have that the dimension of the component of $M_X(2,0,c_2(G))$ containing $G$ has dimension equal to $\dim \Ext^1(G,G)=1-\chi(G,G)$. Applying $\Hom(G,-)$, $\Hom(-,E)$ and $\Hom(-,\sO_l)$ to \eqref{ses1} we obtain
\[
\chi(G,G)=\chi(E,E)-\chi(E,\sO_l)-\chi(\sO_l,E)+\chi(\sO_l,\sO_l).
\]
By inductive hypothesis $\chi(E,E)=4-4k$. We compute the remaining terms in the equation. Applying $\Hom(-,E)$, $\Hom(E,-)$ and $\Hom(\sO_l,-)$ to \eqref{risoluzione}, a Riemann-Roch computation yields $\chi(E,\sO_l)=\chi(\sO_l,E)=2$ and $\chi(\sO_l,\sO_l)=0$, thus
\[
\dim \Ext^1(G,G)=1-\chi(G,G)=4k+1.
\]

Furthermore tensor \eqref{ses1} by $\sO_{m_i}(-h)$ where $m_i$ is a generic line from the family $e_i$. Since $m_i$ and $l$ are disjoint for each $i$,  tensoring by $\sO_{m_i}(-h)$ leaves the sequence exact. Using the fact that $E_{|_{m_i}}=\sO_{m_i}^{\oplus 2}$, we obtain $G_{|_{m_i}}=\sO_{m_i}^{\oplus 2}$ and in particular $H^0(G\otimes \sO_{m_i}(-h))=0$ for each $i$.

{\bf Step 2}: Deforming $G$ to a locally free sheaf $F$.

Now we take a deformation of $G$ in $M_X(2,0,c_2(G))$ and let us call it $F$. For semicontinuity $F$ satisfies
\[
H^0(X, F\otimes \sO_{l}(-h))=0 \ \text{and} \ H^1(X,F(-h))=0
\]
Our goal is to show that $F$ is locally free. Let us take $E'$ and $l'$ two deformation in a neighborhood of $E$ and $l$ respectively. The strategy is to show that if $F$ is not locally free, then he would fit into a sequence
\[
0 \rightarrow F \rightarrow E' \rightarrow \sO_{l'} \rightarrow 0.
\]
But such $F$'s are parametrized by a family of dimension $4k$: indeed we have a $(4k-3)$-dimensional family for the choice of $E'$, 2 for the choice of a line in the first family and we have 1 for $\p =\mathbb{P}(H^0(l',E_{|_{l'}}))$, since $E_{l'}\cong \mathcal{O}_{l'}^2$. But we showed that $G$, and hence $F$, moves over a $(4k+1)$-dimensional component in $M_X(2,0,c_2(G))$, so $F$ must be locally free.

Given such $F$ let us consider the natural short exact sequence
\begin{equation}\label{biduale}
0\rightarrow F \rightarrow F^{\vee \vee} \rightarrow T \rightarrow 0.
\end{equation}
Let us denote by $Y$ the support of $T$. Since we supposed $F$ not locally free, we have that $Y\neq \emptyset$. Furthermore $T$ is supported in codimension at least two. We say that $Y$ has pure dimension one.

In fact twisting \eqref{biduale} by $\sO_X(-h)$ we observe that if $H^0(X,F^{\vee \vee}(-h))\neq 0$ then a nonzero global section of $F^{\vee \vee}$ will induce via pull-back a subsheaf $K$ of $F$ with $c_1(K)=h$, which is not possible since $F$ is stable. So we have $H^0(X,F^{\vee \vee}(-h))\cong H^1(X,F(-h))=0$ which implies $H^0(X,T(-h))=0$. In particular $Y$ has no embedded points, i.e. is pure of dimension one. We want to show that $Y$ is actually a line.

Let $H$ be a general hyperplane section which does not intersect the points where $F^{\vee\vee}$ is not locally free. Tensor \eqref{ses1} by $\sO_H$. Since $H$ is general the sequence remains exact and $\sO_{l\cap H}$ is supported at one point, which represent the point where $G_H$ fails to be reflexive (in this case also locally free). $F$ is a deformation of $G$ and because of the choice of $H$, restricting \eqref{biduale} to $H$ does not
affect the exactness of the short exact sequence. Moreover $T_H$ is supported on points where $F_H$ is not reflexive. Since being reflexive is an open condition, by semicontinuity $T_H$ is supported at most at one point. But $Y$ cannot be empty and is purely one dimensional, thus $Y\cap H$ consists of one point and $Y$ must be a line $L$. Furthermore by semicontinuity $T$ is of generic rank one and we have $c_2(T)h=-1$ (see \cite[Example 15.3.1]{fulton}).

Now we prove that $F^{\vee\vee}$ is locally free. Twist \eqref{biduale} by $\sO_X(th)$ with $t<<0$. Considering the long exact sequence induced in cohomology we have $h^1(X,T(t))\leq h^2(X,F(t))$ because $h^1(X,F^{\vee \vee})=0$ by Serre's vanishing. Observe that $c=c_3(F^{\vee \vee})$ and $c_2(T)$ are invariant for twists.

By computing the Chern classes using \eqref{biduale} we have $c_3(T)=c$ and $c_3(T(th))=c-2thc_2(T)$. For $t<<0$ we have
\[
h^1(T(th))=-\chi(T(th))=(t+1)hc_2(T)-\frac{c}{2}.
\]
By semicontinuity we have $h^2(F(th))\leq h^2(G(th))$, but using \eqref{ses1} and Hirzebruch-Riemann-Roch formula we obtain $h^2(G(th))=h^1(\sO_{l_1}(t))=-(t+1)$ for $t<<0$. Now we have
\[
(t+1)hc_2(T)-\frac{c}{2}=h^1(T(th))\leq h^2(X,F(t))\leq -(t+1)
\]
so that
\begin{equation}\label{diseq1}
hc_2(T)\geq -1+\frac{c}{2(t+1)},
\end{equation}
which holds for all $t<<0$. Now using \eqref{diseq1} and substituting $hc_2(T)=-1$ we get $c\leq0$. Since $F^{\vee \vee}$ is reflexive, $c\geq 0$ so we obtain $c_3(T)=c=0$.

Now it remains to show that $F^{\vee \vee}$ is a deformation of $E$. The first step is to show that $L$ is a deformation of the line $l$. In order to do so we compute the class of $L$ in $A^2(X)$, which is represented by $c_2(T)=a_1e_1+a_2e_2+a_3e_3$. Consider a divisor $D=\beta_1 h_1+\beta_2 h_2+\beta_3 h_3$, by \eqref{RR} and $c=0$ we have
\[
h^1(L,T(D))=(D+2)c_2(T).
\]
Suppose $\beta_i <<0$ for all $i$. Then
\begin{equation}\label{equa1}
a_1(\beta_1+1)+a_2(\beta_2+1)+a_3(\beta_3+1)=h^1(L,T(D))=h^2(X,F(D))\leq h^2(X,G(D))
\end{equation}
where the last inequality is by semicontinuity. Furthermore $\beta_i <<0$ implies that $h^1(X,E(D))=h^2(X,E(D))=0$ and thus
\begin{equation}\label{equa2}
h^2(X,G(D))=h^1(l,\sO_l(D))=-1-\beta_1.
\end{equation}
We showed that $a_1+a_2+a_3=c_2(T)h=-1$ and combining this with \eqref{equa1} and \eqref{equa2} we obtain
\[
a_2(\beta_2-\beta_1)+a_3(\beta_3-\beta_1)\leq 0
\]
for all $\beta_i<<0$, thus we must have $a_2=a_3=0$ and $a_1=-1$, i.e. $L$ lives in a neighborhood of $l$. Since $c=0$ we have that $F^{\vee \vee}$ is locally free and we computed $c_2(T)=-e_1$, so we get $c_2(F^{\vee \vee})=k_1e_1+k_2e_2+k_3e_3$, which implies that $F^{\vee \vee}$ has the same Chern classes as $E$. Therefore, $F^{\vee \vee}$ is a flat deformation of $E$ and also semistable, so $F^{\vee \vee}$ lies in a neighborhood of $E$ in $M_X(2,0,c_2(E))$. Observe that, by semicontinuity, $F$ has trivial splitting type on the generic line of each family. To summarize, we showed that if $F$ is not locally free it fits into a sequence
 \[
0 \rightarrow F \rightarrow E' \rightarrow \sO_{l'} \rightarrow 0
\]
with $E'$ and $l'$ flat deformation of $E$ and $l$. But we observed that this is not possible, thus $F$ must be locally free.

\end{proof}

\section{Jumping lines}
In this section we describe the locus of jumping lines inside the Hilbert scheme of lines in $X$. Let us recall the definition of a jumping line:
\begin{definition}
Let $E$ be a rank two vector bundle on $X$ with $c_1(E)=0$. A jumping line for $E$ is a line $L$ such that $H^0(E_L(-r))=0$ for some $r>0$. The largest such integer is called the order of the jumping line $L$.
\end{definition}
Let us consider a line in the first family $e_1=h_2h_3$. Then we have the following resolution
\begin{equation}\label{risoluzioneL}
0\rightarrow \sO_X(-h_2-h_3)\rightarrow \sO_X(-h_2)\oplus \sO_X(-h_3)\rightarrow \sO_X \rightarrow \sO_L \rightarrow 0.
\end{equation}
Let $\mathcal{H}$ be the Hilbert scheme of lines of the family $h_2h_3$. In particular we have $\mathcal{H}=\p \times \p$, and we will denote by $l$ and $m$ the generators of $\Pic (\sH)$. Writing the sequence \eqref{risoluzioneL} with respect to global section of $\sO_X(-h_2)\oplus \sO_X(-h_3)$ we get the description of the universal line $\mathcal{L}\subset X\times \mathcal{H}$
\begin{equation}\label{universal}
0\rightarrow \sO_X(-h_2-h_3)\boxtimes \sO_\sH(-1,-1) \rightarrow
\begin{matrix}
\sO_X(-h_2)\boxtimes \sO_\sH(-1,0)\\
\oplus\\
\sO_X(-h_3)\boxtimes \sO_\sH(0,-1)
\end{matrix}
\rightarrow \sO_{X\times \sH} \rightarrow \sO_\sL \rightarrow 0.
\end{equation}
Let us denote by $\sD_E^1$ the locus of jumping lines (from the first family) of an instanton bundle $E$, and by $i$ its embedding in $\sH$. Let us consider the following diagram
\begin{equation}\label{diagramma}
\begin{tikzcd}[column sep=small]
& \sL \subset X\times \sH \arrow["q"',dl] \arrow[dr,"p"] & \\
  X &                         & \sH
\end{tikzcd}
\end{equation}
where $q$ and $p$ are the projection to the first and second factor respectively.
\begin{lemma}\label{support}
$\sD_E^1$ is the support of the sheaf $R^1p_\ast (q^\ast(E(-h_1))\boxtimes \sO_\sL)$.
\end{lemma}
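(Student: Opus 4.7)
The plan is to compute the sheaf $R^1p_\ast(q^\ast E(-h_1)\otimes \sO_\sL)$ fiber by fiber over $\sH$ and to show that its fiber at $L$ vanishes exactly when $L$ is not a jumping line; Nakayama's lemma will then identify its support with $\sD_E^1$.

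First I would replace the direct image on $X\times\sH$ by a direct image from $\sL$ itself. Writing $j\colon \sL\hookrightarrow X\times \sH$ for the closed embedding and setting $p_\sL:=p\circ j$, $q_\sL:=q\circ j$, the fact that $j$ is a closed (hence affine) embedding gives
\[
R^1p_\ast\bigl(q^\ast E(-h_1)\otimes \sO_\sL\bigr)\cong R^1p_{\sL\ast}\bigl(q_\sL^\ast E(-h_1)\bigr).
\]
From \eqref{universal} one reads off that $p_\sL\colon \sL\to \sH$ is the universal line of the first ruling, which is a $\p$-bundle; in particular $p_\sL$ is flat, proper, of relative dimension one, and each scheme-theoretic fibre $p_\sL^{-1}(L)$ is identified via $q_\sL$ with the line $L\subset X$. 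Because the fibers have dimension one, $R^2p_{\sL\ast}$ vanishes identically, so by cohomology and base change $R^1p_{\sL\ast}$ commutes with arbitrary base change; its fiber at $L\in\sH$ is therefore
\[
H^1\bigl(L,\, E|_L\otimes \sO_X(-h_1)|_L\bigr).
\]

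Finally I would compute this $H^1$ by hand. A line $L$ of class $e_1=h_2h_3$ is of the form $\p\times\{\mathrm{pt}\}\times\{\mathrm{pt}\}$, so $\sO_X(-h_1)|_L\cong \sO_L(-1)$, and since $c_1(E)=0$ the Grothendieck splitting yields $E|_L\cong \sO_L(a)\oplus \sO_L(-a)$ for a unique $a\ge 0$. Then
\[
H^1(L, E|_L(-1))=h^1(\sO_L(a-1))+h^1(\sO_L(-a-1))=a,
\]
which vanishes exactly when $a=0$, i.e.\ when $L$ is non-jumping in the sense of the definition above. Combined with Nakayama, this shows the stalk of $R^1p_\ast(q^\ast E(-h_1)\otimes \sO_\sL)$ at $L$ is zero if and only if $L\notin \sD_E^1$, which is the claim. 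The only mildly technical point is the base change step, but it is immediate from $R^2p_{\sL\ast}=0$; the rest is a direct calculation on $\p$.
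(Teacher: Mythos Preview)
Your proof is correct and is precisely the standard argument that the paper invokes by citation: the paper's own ``proof'' is just a reference to \cite[p.~108]{OSS} for the $\pp^n$ case together with the remark that the argument is local, and what you have written is exactly that local base-change computation adapted to the family $|e_1|$ on $X$. So the approaches coincide; you have simply spelled out the details.
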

\begin{proof}
See \cite[p.~108]{OSS} for a proof for $\pp^n$. Since the argument is local, it can be generalized to our case.
\end{proof}
We recall two classical result that we need in order to describe the locus of jumping lines.
\begin{theorem}[Grauert]\cite[Corollary 12.9]{hart}\label{grauert}
Let $f:X\rightarrow Y$ be a projective morphism of noetherian schemes with $Y$ integral, and let $F$ be a coherent sheaf on $X$, flat over $Y$. If for some $i$ the function $h^i(Y,F)$ is constant on $Y$, then $R^if_\ast (F)$ is locally free on $Y$, and for every $y$ the natural map
\begin{equation}\label{mappagrauert}
R^if_\ast(F)\otimes k(y)\rightarrow H^i(X_y,F_y)
\end{equation}
is an isomorphism.
\end{theorem}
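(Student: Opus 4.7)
The plan is to reduce Grauert's theorem to a statement about complexes of finitely generated free modules over a commutative noetherian ring, by means of the Grothendieck complex associated to the projective morphism $f$. The assertion being local on $Y$, I would first pass to an affine open $U=\operatorname{Spec}(A)\subseteq Y$; since both the formation of higher direct images $R^i f_\ast F$ and the base change map \eqref{mappagrauert} are compatible with restriction along open immersions, it suffices to prove the statement over each such $U$.

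On $\operatorname{Spec}(A)$, the classical theorem of Grothendieck (Hartshorne III.12.2) produces a bounded complex
\[
K^\bullet\colon K^0\xrightarrow{d^0} K^1\xrightarrow{d^1}\cdots\xrightarrow{d^{n-1}} K^n
\]
of finitely generated free $A$-modules, together with a functorial isomorphism
\[
H^p(f^{-1}(U),\,F\otimes_A M)\;\cong\;H^p(K^\bullet\otimes_A M)
\]
for every $A$-module $M$. Applied with $M=A$ this identifies $R^i f_\ast F|_U$ with the quasi-coherent sheaf attached to the module $H^i(K^\bullet)$, and applied with $M=k(y)$ it identifies the comparison map in \eqref{mappagrauert} with the natural map $H^i(K^\bullet)\otimes_A k(y)\to H^i(K^\bullet\otimes_A k(y))$. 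In particular $h^i(X_y,F_y)=\dim_{k(y)}H^i(K^\bullet\otimes_A k(y))$, so, writing $r_p(y):=\dim_{k(y)}\operatorname{im}(d^p\otimes k(y))$ for the rank at $y$ of the matrix of $d^p$, one has
\[
h^i(X_y,F_y)\;=\;\operatorname{rk}_A(K^i)-r_i(y)-r_{i-1}(y).
\]

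The heart of the argument, and its main technical obstacle, is to deduce from the hypothesis that $h^i$ is locally constant that the individual ranks $r_i$ and $r_{i-1}$ are themselves locally constant on $Y$. Each $r_p$ is lower semicontinuous, since $\{y:r_p(y)\ge r\}$ is the open complement of the common vanishing locus of the $r\times r$ minors of $d^p$ (here the integrality of $Y$ ensures that the set where both ranks are maximal is a nonempty open subset on which we can anchor the argument). If $r_i+r_{i-1}=\operatorname{rk}_A(K^i)-h^i$ is locally constant, then in a neighborhood of any $y_0$ the inequality $r_i\ge r_i(y_0)$ forces $r_{i-1}\le r_{i-1}(y_0)$, and combined with the symmetric lower-semicontinuity inequality $r_{i-1}\ge r_{i-1}(y_0)$ it yields equality throughout the neighborhood, so both $r_i$ and $r_{i-1}$ are locally constant near $y_0$. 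A standard linear-algebraic lemma then shows that a homomorphism of finite-rank free $A$-modules whose rank function is locally constant has kernel, image and cokernel all locally free, and realized as direct summands of the ambient modules (one builds a local splitting by expanding a maximal non-vanishing minor).

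Applying this to $d^{i-1}$ and $d^i$, both $\operatorname{im}(d^{i-1})$ and $\ker(d^i)$ are locally free direct summands of $K^i$, so the quotient $H^i(K^\bullet)=\ker(d^i)/\operatorname{im}(d^{i-1})$ is locally free; this proves that $R^i f_\ast F$ is locally free on $Y$. Finally, the short exact sequence $0\to\operatorname{im}(d^{i-1})\to\ker(d^i)\to H^i(K^\bullet)\to 0$ has all three terms locally free, so tensoring with $k(y)$ preserves its exactness; comparing with the exact sequence computing $H^i(K^\bullet\otimes_A k(y))$ from $\ker(d^i\otimes k(y))/\operatorname{im}(d^{i-1}\otimes k(y))$ identifies the base change map \eqref{mappagrauert} with the resulting natural isomorphism, completing the proof.
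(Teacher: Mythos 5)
This result is quoted in the paper directly from Hartshorne (III, Corollary 12.9) with no proof of its own, so the only meaningful comparison is with the cited source; your sketch is essentially that textbook argument (Grothendieck complex, semicontinuity of matrix ranks, splitting off a unit minor), and it is correct. One point deserves tightening: you locate the use of integrality of $Y$ in the semicontinuity step, but that step --- two lower semicontinuous functions with locally constant sum are each locally constant --- needs no hypothesis on $Y$ whatsoever. The hypothesis is consumed instead by your ``standard linear-algebraic lemma'': after expanding a maximal nonvanishing minor one reduces $d^p$ locally to $\mathrm{id}_r\oplus\psi$ with $\psi\otimes k(y)=0$ for every $y$, and one needs the local rings to be reduced to conclude $\psi=0$ (over $k[\epsilon]/(\epsilon^2)$ the map $\epsilon\colon A\to A$ has constant fiber rank $0$ but non-free cokernel, so the lemma fails without reducedness). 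Since $Y$ is integral its local rings are domains and the lemma applies, but you should say explicitly that this is where the hypothesis enters. Finally, the last identification merits one more line: because $\ker d^i$ and $\operatorname{im} d^{i-1}$ are direct summands of $K^i$ (and $\operatorname{im} d^i$ of $K^{i+1}$), the natural maps $\ker(d^i)\otimes k(y)\to\ker(d^i\otimes k(y))$ and $\operatorname{im}(d^{i-1})\otimes k(y)\to\operatorname{im}(d^{i-1}\otimes k(y))$ are isomorphisms, which is exactly what allows the comparison of the two short exact sequences and hence the identification of the map \eqref{mappagrauert} with an isomorphism.
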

\begin{theorem}\cite[Theorem 5.3, Appendix A]{hart}
Let $f:X\rightarrow Y$ be a smooth projective morphism of nonsingular quasi projective varieties. Then for any $x\in K(X)$ we have
\begin{equation}\label{GRR}
ch(f_!(x))=f_\ast(ch(x).td(T_f))
\end{equation}
in $A(Y)\otimes \mathbb{Q}$, where $T_f$ is the relative tangent sheaf of $f$.
\end{theorem}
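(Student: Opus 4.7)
The statement to be proved is the Grothendieck--Riemann--Roch theorem as formulated in Hartshorne, Appendix A. Since this is a foundational result of algebraic $K$-theory whose complete proof would occupy many pages and is entirely orthogonal to the instanton-bundle material of the paper, the plan is simply to invoke \cite{hart} (Theorem 5.3 of Appendix A) and not to reprove it inside this article. The citation is used purely as a tool: in the sequel we will apply it to the smooth projective morphism $p\colon \sL\to\sH$ (the universal line over the Hilbert scheme of lines in the first ruling), together with the natural map $q\colon\sL\to X$, in order to compute Chern characters of $Rp_{*}(q^{*}E(-h_1))$.

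For completeness, I would briefly recall the classical outline. The proof strategy of Grothendieck reduces the general case to two building blocks by factoring an arbitrary projective morphism $f\colon X\to Y$ as a closed immersion $i\colon X\hookrightarrow \pp(\sE)$ into a projective bundle over $Y$, followed by the structural projection $\pi\colon\pp(\sE)\to Y$. One then proves the formula $ch(f_{!}x)=f_{*}(ch(x)\cdot td(T_f))$ separately for $i$ and for $\pi$, and composes using the multiplicativity of the Todd class in a short exact sequence of relative tangent bundles together with the projection formula.

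The case of the projection $\pi\colon\pp(\sE)\to Y$ is handled by an explicit calculation: the $K$-theory of $\pp(\sE)$ is generated over $K(Y)$ by the tautological class $\sO(1)$, so both sides of GRR can be computed in closed form using the splitting principle and the relation in the Chow ring coming from the Euler sequence. The case of the closed immersion $i$ is the genuinely difficult one, and this is where I expect the main obstacle to lie in any self-contained treatment. The standard argument here is the deformation to the normal cone of Baum--Fulton--MacPherson, which interpolates between $i$ and the zero-section embedding of the normal bundle $N_{X/\pp(\sE)}$, reducing the computation to a Koszul-resolution calculation on a vector-bundle embedding.

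Given that the present paper is about instanton bundles on $\ppp$ and uses GRR only as a black-box tool to compute $ch(Rp_{*}(\cdots))$ on the two-dimensional Hilbert scheme $\sH=\p\times\p$, the proposal is: write one line of proof referring the reader to \cite[Appendix A, Theorem 5.3]{hart}, and devote the expository energy instead to the subsequent application of the formula \eqref{GRR} in the analysis of the jumping-line locus $\sD_E^{1}$.
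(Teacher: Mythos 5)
Your proposal matches the paper exactly: the statement is a recalled classical result (Grothendieck--Riemann--Roch) that the paper states with a citation to \cite[Theorem 5.3, Appendix A]{hart} and no proof, using it only as a black box in the jumping-lines computation. Citing Hartshorne and moving on is precisely what the authors do, so your (optional) sketch of the projective-bundle/closed-immersion reduction is extra but harmless.
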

Now we are ready to state the following
\begin{prop}
Let $E$ be a generic instanton on $X$ with $c_2=k_1e_1+k_2e_2+k_3e_3$. Then $\sD_E^1$ is a divisor given by $\sD_E^1=k_3l+k_2m$ equipped with a sheaf $G$ fitting into
\begin{equation}\label{risoluzionejumping}
0\rightarrow \sO_\sH^{k_3}(-1,0)\oplus \sO_\sH^{k_2} (0,-1)\rightarrow \sO_\sH^{k_2+k_3}\rightarrow i_\ast G\rightarrow 0.
\end{equation}
\end{prop}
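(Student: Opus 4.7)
The plan is to compute $R^\bullet p_\ast\bigl(q^*E(-h_1)\otimes \sO_\sL\bigr)$ by pushing forward the universal line resolution \eqref{universal}, twisted by $q^*E(-h_1)$, and then to identify the first derived pushforward with $i_\ast G$ via Lemma~\ref{support}. Since $E$ is locally free, tensoring \eqref{universal} by $q^*E(-h_1)$ yields a four-term exact sequence on $X\times\sH$ whose terms are all of the form $F\boxtimes L$ with $F$ a twist of $E$ on $X$ and $L$ a line bundle on $\sH$; the projection formula together with flat base change then reduces each pushforward to $R^i p_\ast(F\boxtimes L)=H^i(X,F)\otimes L$.

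The key inputs are the cohomology groups already computed in the proof of Theorem~\ref{monadeteo}: one has $H^\bullet(E(-h))=0$, while $H^\bullet(E(-h_1-h_2))$, $H^\bullet(E(-h_1-h_3))$ and $H^\bullet(E(-h_1))$ are concentrated in degree $1$ with respective dimensions $k_3$, $k_2$ and $k_2+k_3$. Splitting the four-term sequence into two short exact sequences and chasing through the corresponding long exact sequences of derived pushforwards, the total vanishing of $R p_\ast$ of the leftmost term collapses everything to
\[
0 \to p_\ast D \to \sO_\sH^{k_3}(-1,0)\oplus \sO_\sH^{k_2}(0,-1) \xrightarrow{\phi} \sO_\sH^{k_2+k_3} \to R^1 p_\ast D \to 0,
\]
where $D:=q^*E(-h_1)\otimes \sO_\sL$.

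It remains to show that $p_\ast D=0$ for a generic $E$. By the preceding existence theorem, a generic instanton bundle is generically trivial on lines of the family $|e_1|$, so for a generic $[L]\in\sH$ we have $E|_L\otimes \sO_L(-h_1|_L)\cong \sO_L(-1)^{\oplus 2}$, which has no global sections. Hence $p_\ast D$ has zero generic rank; being a subsheaf of a locally free sheaf on its right, it is torsion-free and therefore identically zero. The four-term sequence then reduces to the desired resolution \eqref{risoluzionejumping} with $i_\ast G := \Coker\phi$, whose support equals $\sD_E^1$ by Lemma~\ref{support}. Finally, since $\phi$ is a morphism between vector bundles of the same rank $k_2+k_3$ that is generically injective, its degeneracy locus is cut out by $\det\phi$, a section of the line bundle $\sO_\sH(k_3,k_2)$; this identifies $\sD_E^1$ as a divisor of class $k_3 l+k_2 m$ in $A^1(\sH)$.

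The main obstacle is the vanishing $p_\ast D=0$, which is precisely where the genericity hypothesis on $E$ enters: without the generic triviality on lines proved in the previous section, $\phi$ could fail to be generically injective and $\sD_E^1$ could have higher-dimensional components, so the proposition would fail.
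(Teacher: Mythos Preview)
Your proof is correct and arrives at the same four-term sequence and the same vanishing argument for the kernel as the paper. The only difference is the route to the map $\phi$: the paper computes the Fourier--Mukai transform $\Phi_\sL$ on each line bundle appearing in the monad \eqref{monade} (tensored by $\sO_X(-h_1)$) and then assembles these via the two short exact sequences of the monad, whereas you tensor the universal resolution \eqref{universal} directly by $q^\ast E(-h_1)$ and use the cohomology of $E$ in the relevant twists. Both methods feed on the same cohomological input already established in Theorem~\ref{monadeteo} and produce the identical map between $\sO_\sH^{k_3}(-1,0)\oplus\sO_\sH^{k_2}(0,-1)$ and $\sO_\sH^{k_2+k_3}$; your version is slightly more economical since it bypasses the monad entirely, while the paper's version makes the block structure of $\gamma$ (a $(k_2+k_3)\times k_3$ block linear in the first variables and a $(k_2+k_3)\times k_2$ block linear in the second) more transparent. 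The torsion-free argument you give for $p_\ast D=0$ is exactly the one the paper uses for $\Ker\gamma$.
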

\begin{proof}
By Lemma \ref{support} a line $L$ is jumping for $E$ if and only if the point of $\sH$ corresponding to $L$ lies in the support of $R^1p_\ast (q^\ast(E(-h_1))\boxtimes \sO_\sL)$.

Let us consider the Fourier-Mukai functor
\[
\Phi_{\sL}: D^b(X)\rightarrow D^b(\sH)
\]
with kernel the structure sheaf of $\sL$. We need to compute the transform of the bundles appearing in the monad \eqref{monade} tensorized by $\sO_X(-h_1)$.
\begin{itemize}
\item $\Phi_{\sL}(\sO_X(-2h_1-h_2))$.

By \eqref{universal} tensored by $\sO_X(-2h_1-h_2) \boxtimes \sO_\sH$, since the only non zero cohomology on $X$ is $h^2(\sO_X(-2h_1-2h_2))=1$ we get $R^i p_\ast(q^\ast(\sO_X(-2h_1-h_2))\boxtimes \sO_\sL)=0$ for $i\neq 1$. Using the projection formula we obtain
\[
R^1 p_\ast(q^\ast(\sO_X(-2h_1-h_2))\boxtimes \sO_\sL)\cong R^2 p_\ast(q^\ast(\sO_X(-2h_1-2h_2)))\boxtimes \sO_\sH(-1,0).
\]
Observe that by Theorem \ref{grauert} we have that $R^2 p_\ast(q^\ast(\sO_X(-2h_1-2h_2)))$ is a rank one vector bundle on $\sH$. Using \eqref{GRR} it follows trivially that \[ c_1(R^2 p_\ast q^\ast(\sO_X(-2h_1-2h_2)))=0. \] In fact consider the diagram \eqref{diagramma}. Since $X$ is a threefold and $\sH$ is a surface, we have that after being pulled-back on $X\times \sH$ and push-forwarded to $\sH$ all the cycles on $X$ became either zero or points. So we obtain

\[
R^1 p_\ast (q^\ast(\sO_X(-2h_1-h_2))\boxtimes \sO_\sL)\cong \sO_\sH(-1,0).
\]
We continue with the other terms of the monad \eqref{monade}. The computations are completely analogous.

\item $\Phi_{\sL}(\sO_X(-2h_1-h_3))$.

By \eqref{universal} tensored by $\sO_X(-2h_1-h_3) \boxtimes \sO_\sH$, since the only non zero cohomology on $X$ is $h^2(\sO_X(-2h_1-2h_3))=1$ we get $R^i p_\ast(q^\ast(\sO_X(-2h_1-h_3))\boxtimes \sO_\sL)=0$ for $i\neq 1$ and
\[
R^1 p_\ast(q^\ast(\sO_X(-2h_1-h_3))\boxtimes \sO_\sL)\cong \sO_\sH(0,-1).
\]

\item $\Phi_{\sL}(\sO_X(-h_1-h_2-h_3))$.

By \eqref{universal} tensored by $\sO_X(-h_1-h_2-h_3) \boxtimes \sO_\sH$, since the cohomology on $X$ is all zero we get $R^i p_\ast(q^\ast(\sO_X(-h_1-h_2-h_3))\boxtimes \sO_\sL)=0$ for all $i$.

\item $\Phi_{\sL}(\sO_X(-h_1-h_2))$.

By \eqref{universal} tensored by $\sO_X(-h_1-h_2) \boxtimes \sO_\sH$, since the cohomology on $X$ is all zero we get $R^i p_\ast(q^\ast(\sO_X(-h_1-h_2))\boxtimes \sO_\sL)=0$ for all $i$.

\item $\Phi_{\sL}(\sO_X(-h_1-h_3))$.

By \eqref{universal} tensored by $\sO_X(-h_1-h_3) \boxtimes \sO_\sH$, since the cohomology on $X$ is all zero we get $R^i p_\ast(q^\ast(\sO_X(-h_1-h_3))\boxtimes \sO_\sL)=0$ for all $i$.

\item $\Phi_{\sL}(\sO_X(-2h_1))$.

By \eqref{universal} tensored by $\sO_X(-2h_1) \boxtimes \sO_\sH$, since the only non zero cohomology on $X$ is $h^2(\sO_X(-2h_1))=1$ we get $R^i p_\ast(q^\ast(\sO_X(-2h_1))\boxtimes \sO_\sL)=0$ for $i\neq 1$ and
\[
R^1 p_\ast(q^\ast(\sO_X(-2h_1))\boxtimes \sO_\sL)\cong \sO_\sH.
\]

\item $\Phi_{\sL}(\sO_X(-h_1))$.

By \eqref{universal} tensored by $\sO_X(-h_1) \boxtimes \sO_\sH$, since the cohomology on $X$ is all zero we get $R^i p_\ast(q^\ast(\sO_X(-h_1))\boxtimes \sO_\sL)=0$ for all $i$.
\end{itemize}
Now we apply the $\Phi_\sL$ to the monad \eqref{monade}. First we apply $\Phi_\sL$ to the sequence
\[
0\rightarrow K \rightarrow
 \begin{matrix}
\sO_X^{k_2+k_3}(-h_1) \\
\oplus \\
\sO_X^{k_1+k_3}(-h_2) \\
\oplus \\
\sO_X^{k_1+k_2}(-h_3)
\end{matrix}
\rightarrow \sO_X^{k-2}\rightarrow 0
\]
we get $R^i p_\ast q^\ast(K\otimes \sO_X(-h_1))=0$ for $i\neq 1$ and
\[
R^1 p_\ast (q^\ast(K\otimes \sO_X(-h_1))\boxtimes \sO_\sL)\cong \sO_\sH^{k_2+k_3}.
\]
From
\[
0\rightarrow
\begin{matrix}
\sO_X^{k_3}(-h_1-h_2) \\
\oplus \\
\sO_X^{k_2}(-h_1-h_3) \\
\oplus \\
\sO_X^{k_1}(-h_2-h_3)
\end{matrix}
\rightarrow K \rightarrow E \rightarrow 0
\]
we get
\[
0\rightarrow R^0p_\ast (q^\ast (E \otimes \sO_X(-h_1))\boxtimes \sO_\sL)\rightarrow \begin{matrix} \sO_\sH^{k_3}(-1,0)  \\ \oplus \\ \sO_\sH^{k_2}(0,-1) \end{matrix} \overset{\gamma}{\rightarrow} \sO_\sH^{k_2+k_3} \to R^1 p_\ast (q^\ast (E \otimes \sO_X(-h_1))\to0 .
\]
so $\gamma$ is a $(k_2+k_3)\times (k_2+k_3)$ matrix made by two blocks. The first one is a $(k_2+k_3)\times (k_3)$ linear matrix in the first variables of $\sH$ and the second one a $(k_2+k_3)\times (k_2)$ linear matrix in the second variables of $\sH$. We observe that $\Ker (\gamma)$ is zero since is a torsion free sheaf which is zero outside $\sD_E^1$, and $\Coker (\gamma)\cong R^1 p_\ast (q^\ast (E \otimes \sO_X(-h_1))\boxtimes \sO_\sL)$ is an extension to $\sH$ of a rank 1 sheaf on $\sD_E^1$ denoted by $G$. That is a divisor $k_3l+k_2m$ given by the vanishing of the determinant of $\gamma$.

\end{proof}
\begin{remark}
The Hilbert space of lines on $X$ is made of three disjoint connected component, each of which is isomorphic to the quadric surface $\p \times \p$ (see \cite[Proposition 4.1]{p1p1p1}). So we can repeat this exact same reasoning to the lines of the family $e_2$ and $e_3$, i.e. permuting the indices $(1,2,3)$, we can describe the locus $\mathcal{D}_E^i$ as a divisor of type $k_jl+k_hm$ with $i\neq j \neq h \neq i$.

In a completely analogous way, it is possible to use the monad \eqref{monade2} to study the locus of jumping lines, obtaining the same result.
\end{remark}

\vspace{\baselineskip}
\bibliographystyle{plain}
%\bibliography{biblioinsta}

\begin{thebibliography}{10}

\bibitem{ottanc}
V.~Ancona and G.~Ottaviani.
\newblock An introduction to the derived categories and the theorem of
  {Beilinson}.
\newblock {\em Atti Accad. Peloritana}, LXVII, 1989.

\bibitem{AO3}
V.~Ancona and G.~Ottaviani.
\newblock Canonical resolutions of sheaves on {Schubert} and {Brieskorn}
  varieties.
\newblock In Klas Diederich, editor, {\em Complex Analysis}, pages 14--19,
  Wiesbaden, 1991. Vieweg+Teubner Verlag.

\bibitem{AO}
V.~Ancona and G.~Ottaviani.
\newblock On singularities of {$M_{\mathbb{P}^3}(c_1,c_2)$}.
\newblock {\em International Journal of Mathematics}, 9:407--419, 1998.

\bibitem{AHMP}
M.~Aprodu, S.~Huh, F.~Malaspina, and J.~Pons-Llopis.
\newblock Ulrich bundles on smooth projective varieties of minimal degree.
\newblock arXiv:1705.07790, to appear Proc. Amer. Math. Soc., 2018.

\bibitem{arrondo}
E.~Arrondo.
\newblock A {Home-Made} {Hartshorne-Serre} correspondence.
\newblock {\em Rev. Mat. Complut.}, 20(2):423 -- 443, 2007.

\bibitem{AHDM}
M.~F. Atiyah, V.G. Drinfeld, N.~J. Hitchin, and Y.I. Manin.
\newblock Construction of instantons.
\newblock {\em Phys. Lett. A}, 65:185--187, 1978.

\bibitem{Ba2}
W.~Barth.
\newblock Moduli of vector bundles on the projective plane.
\newblock {\em Inventiones mathematicae}, 42(1):63--91, Dec 1977.

\bibitem{Be}
A.~A. Beilinson.
\newblock Coherent sheaves on {Pn} and problems of linear algebra.
\newblock {\em Functional Analysis and Its Applications}, 12(3):214--216, Jul
  1978.

\bibitem{BO}
C.~B\"ohning.
\newblock Derived categories of coherent sheaves on rational homogeneous
  manifold.
\newblock {\em Doc. Math.}, 11:261--331, 2006.

\bibitem{Bu}
N.~P. Buchdahl.
\newblock Instantons on {$\mathbb{CP}^2$}.
\newblock {\em J. Diff. Geometry}, 24:19--52, 1986.

\bibitem{CCGM}
G.~Casnati, E.~Coskun, {\"O. Gen\c{c}}, and F.~Malaspina.
\newblock Instanton bundles on the blow-up of the projective $3$-space at a
  point.
\newblock arXiv:1909.10281 [math.AG], to appear in Michigan Math. J., 2019.

\bibitem{pppacm}
G.~Casnati, D.~Faenzi, and F.~Malaspina.
\newblock Rank two {aCM} bundles on the del {Pezzo} threefold with {Picard}
  number 3.
\newblock {\em Journal of Algebra}, 429:413 -- 446, 2015.

\bibitem{p1p1p1}
G.~Casnati, D.~Faenzi, and F.~Malaspina.
\newblock Moduli spaces of rank two {aCM} bundles on the {Segre} product of
  three projective lines.
\newblock {\em Journal of Pure and Applied Algebra}, 220(4):1554 -- 1575, 2016.

\bibitem{CTT}
I.~Coand{\u{a}}, A.~S. Tikhomirov, and G.~Trautmann.
\newblock Irreducibility and smoothness of the moduli space of mathematical
  {$5$}-instantons over {$\mathbb{P}_3$}.
\newblock {\em International Journal of Mathematics}, 14(01):1--45, 2003.

\bibitem{CO}
L.~Costa and G.~Ottaviani.
\newblock Nondegenerate multidimensional matrices and instanton bundles.
\newblock {\em Trans. Amer. Math. Soc.}, 355(1):49--55, 2003.

\bibitem{Don}
S.~K. Donaldson.
\newblock Vector bundles on flag manifolds and the {W}ard correspondence.
\newblock In {\em Geometry today (Rome)}, volume~60 of {\em Progr. Math.},
  pages 109--119. {Birkh\"{a}user Boston, Boston, MA}, 1985.

\bibitem{ES}
G.~Ellingsrud and S.~A. Str{\o}mme.
\newblock Stable rank-2 vector bundles on {$\mathbb{P}^3$} with {$c_1=0$} and
  {$c_2=3$}.
\newblock {\em Mathematische Annalen}, 255(1):123--135, Mar 1981.

\bibitem{Fa2}
D.~Faenzi.
\newblock Even and odd instanton bundles on {Fano} threefolds of {Picard}
  number one.
\newblock {\em Manuscripta Mathematica}, 144(1):199--239, May 2014.

\bibitem{fulton}
W.~Fulton.
\newblock {\em Intersection {Theory}}.
\newblock Springer, 1984.

\bibitem{GO}
A.~Gorodentsev and S.A. Kuleshov.
\newblock Helix theory.
\newblock {\em Mosc. Math. J.}, 4(2):377--440, 2004.

\bibitem{hart}
R.~Hartshorne.
\newblock {\em Algebraic {Geometry}}, volume~52 of {\em Graduate Text in
  Mathematics}.
\newblock Springer-Verlag New York, 1977.

\bibitem{Har2}
R.~Hartshorne.
\newblock Stable vector bundles of rank 2 on {$\mathbb{P}^3$}.
\newblock {\em Mathematische Annalen}, 238:229--280, 1978.

\bibitem{Hit}
N.~J. Hitchin.
\newblock K{\"{a}}hlerian twistor spaces.
\newblock {\em Proc. Lond. Math. Soc.}, 43(1):133--150, 1981.

\bibitem{JMPS}
M.~Jardim, G.~Menet, D.~Prata, and H.~S{\'a} Earp.
\newblock Holomorphic bundles for higher dimensional gauge theory.
\newblock {\em Bull. London Math. Soc.}, 49:117--132, 2017.

\bibitem{JaVe}
M.~Jardim and M.~Verbitsky.
\newblock Trihyperkahler reduction and instanton bundles on {$\mathbb{CP}^3$}.
\newblock {\em Compositio Mathematica}, 150, 03 2011.

\bibitem{Kat}
P.~I. Katsylo.
\newblock Rationality of the module variety of mathematical instantons with
  {$c_2=5$}.
\newblock In {\em Lie groups, their discrete subgroups, and invariant theory},
  volume~8 of {\em Adv. Math. Sci.}, pages 105--111. Amer. Math. Soc., 1992.

\bibitem{KaOtt}
P.~I. Katsylo and G.~Ottaviani.
\newblock Regularity of the moduli space of instanton bundles
  {$M_{\mathbb{P}^3}(5)$}.
\newblock {\em Transformation Groups}, 8(2):147--158, Jun 2003.

\bibitem{Kuz}
A.~Kuznetsov.
\newblock Instanton bundles on {Fano} threefolds.
\newblock {\em Central European Journal of Mathematics}, 4:1198--1231, 2012.

\bibitem{MMP}
F.~Malaspina, S.~Marchesi, and Joan Pons-Llopis.
\newblock Istanton bundles on the flag variety {$F(0,1,2)$}.
\newblock arXiv:1706.06353 [math.AG], to appear Ann. Sc. Norm. Super. Pisa,
  2018.

\bibitem{OSS}
C.~Okonek, M.~Schneider, and H.~Spindler.
\newblock {\em Vector bundles on complex projective spaces}, volume~3 of {\em
  Progress in Mathematics}.
\newblock Springer-Verlag New York, 1980.

\bibitem{Orlov}
D.~O. Orlov.
\newblock Projective bundles, monoidal transformations and derived categories
  of coherent sheaves.
\newblock {\em Russian Acad. Sci. Ivz. Math.}, 41(1):133--141, 1993.

\bibitem{RU}
A.~N. Rudakov.
\newblock Helices and vector bundles.
\newblock In {\em Seminaire Rudakov}, volume 148. Cambridge University Press,
  1990.

\bibitem{Sa}
G.~Sanna.
\newblock Small charge instantons and jumping line on quintic del {P}ezzo.
\newblock {\em Int. Math. Res. Notices}, 21:6523--6583, 2017.

\bibitem{chavez}
S.~Soberon-Chavez.
\newblock Rank 2 vector bundles over a complex quadric surface.
\newblock {\em The Quarterly Journal of Mathematics}, 36(2):159--172, 06 1985.

\bibitem{T1}
A.~S. Tikhomirov.
\newblock Moduli of mathematical instanton vector bundles with odd $c2$ on
  projective space.
\newblock {\em Izv. Math.}, 76(5), 2012.

\bibitem{T2}
A.~S. Tikhomirov.
\newblock Moduli of mathematical instanton vector bundles with even $c2$ on
  projective space.
\newblock {\em Izv. Math.}, 77(6), 2013.

\end{thebibliography}
%%\begin{thebibliography}{10}
%%\bibitem{aa} xxx.
%%\end{thebibliography}

\end{document}